\newcommand{\Rb}{\mathbb{R}}
\newcommand{\Zb}{\mathbb{Z}}
\newcommand{\Pb}{\mathbb{P}}
\newcommand{\mathsym}[1]{{}}
\newcommand{\unicode}[1]{{}}
\newtheorem{theorem}{Theorem}[section]
\newtheorem{lemma}{Lemma}
\newtheorem{proposition}{Proposition}
\theoremstyle{definition}
\newtheorem{definition}[theorem]{Definition}
\newtheorem{remark}{Remark}
\newtheorem{example}{Example}
\begin{document}

\title{Maps for global separation of roots}

\author{M{\' a}rio M. Gra{\c c}a\thanks{Departamento de Matem{\' a}tica,
Instituto Superior T{\' e}cnico, Universidade de Lisboa, Lisboa, Portugal. $mgraca@math.tecnico.ulisboa.pt$.   }
}
 
\maketitle

\begin{abstract}

\noindent
Two simple predicates are adopted and certain real-valued piecewise continuous functions are constructed from them.  This type of maps will be called quasi-step maps and aim to  separate the fixed points of an iteration map in an interval.  The main properties  of  these maps are studied. Several  worked examples are given where appropriate quasi-step maps for Newton and Halley iteration maps illustrate the main features of quasi-step maps as tools for global separation of roots.
\end{abstract}

\medskip
\noindent
{\it Key words}:
Step function; Fixed point; Iteration map; Newton map; Halley map; Sieve of Eratosthenes.

\medskip
\noindent
{\it 2010 Mathematics Subject Classification}: 65-05, 65H05, 65H20, 65S05.

\section{Introduction}\label{introd}

\noindent
Separation of real roots is a classical subject dating back  to the seminal work of Lagrange  \cite{La}  on polynomial equations. In this paper we aim to offer other computational  perspective to the global separation of roots of a general nonlinear equation by constructing certain iteration maps which will be called  {\em quasi-step} maps.

\noindent
We consider the  problem of finding the roots of a given real\--valued equation $f(x)=0$ on a closed interval $D=[a,b]$, which we write as  a  fixed point equation $x=g(x)$.  Let $ {\cal Z}=\{z_1,z_2,\ldots,z_n \}$ be  the non-empty  set of (distinct)  fixed points of the map $g$. If the set $ {\cal Z}$ was known, a good model of a map separating the fixed points of $g$ in the interval $D$ is
the  following  {\em step map} $\Psi$:
\begin{equation}\label{stepfA}
\Psi(x)=\sum_{i=1}^n z_i {\cal X}_{I_i}(x),\quad x\in D,
\end{equation}
 where   ${\cal X}_{I_i}$ is the \emph {characteristic function}   of the subinterval $I_i\subset D$ (i.e. ${\cal X}_{I_i}(x)=1$ for $x\in I_i$ and ${\cal X}_{I_i}(x)=0$ otherwise) and the intervals $I_i$ are  pairwise disjoint.

%
  \medskip
  \noindent
  In general, a step map of the form \eqref{stepfA} is not directly feasible  from $g$ since  the set of fixed points $ {\cal Z}$  is unknown. It is then natural to look for  a map $\widetilde \Psi$ of the  form
     \begin{equation}\label{stepfBB1}
\widetilde \Psi(x)=
\sum_{i=1}^s \widetilde g (x) {\cal X}_{J_i}(x),\qquad \text{$x\in D$}
\end{equation}
 where: 
 
 \noindent
 (a) The union of the intervals $J_i$  is contained in $D$ and each $J_i$ contains at least a fixed point of $g$.
 
 \noindent
  (b) The function  $ \widetilde g$ is  {\em continuous} on each subinterval $J_i$; 
  
  \noindent
  (c) The function   $ \widetilde g$   preserves the fixed points of   $g$ on each $J_i$.

 \medskip
 \noindent
  Like the map  \eqref{stepfA},  the map \eqref{stepfBB1} may be seen as a   tool for  separation of points in ${\cal Z}$.  A  map  $\widetilde \Psi$ of the type  \eqref{stepfBB1}  will be called a  {\em quasi-step map} (see Definition~\ref{edudef}).

\medskip
\noindent
The construction of a map $\widetilde g$ as  in  \eqref{stepfBB1} will be done by considering   one or more predicates which are based upon the map $g$ and the  domain $D$.  
The predicates to be used  hereafter are denoted by  $ \Pb_0$ and $ \Pb_1$. Given a constant $d>0$, these predicates are:
$$
\begin{array}{ll}
 \Pb_0:& x\in D,\, \, \text{ $y=g(x)\in D$  and $ |y-x|<d$},\\
 \Pb_1:&\text{$x\in D$,  $y=g(x)\in D$, $w=g(y)\in D$  and  $|w-y|\leq |y-x|$}.
 \end{array}
 $$
For any point $x\in D$ the predicate $ \Pb_0$ tests the image $y=g(x)$ while  $ \Pb_1$ tests two applications of  $g$.

\medskip
\noindent
A collection of subintervals $J_i\subset D$ is induced by a sort of {\em divide and conquer} effect from the action of one or both predicates.
Moreover when we use the predicate $\Pb_0$ (resp. $\Pb_1$) the value assigned to $\widetilde{g}(x)$ in \eqref{stepfBB1} will be $y=g(x)$ (resp. $w=g(g(x)))$ for all points $x\in D$ for which $\Pb_0$ (resp. $\Pb_1$)  is true. All the other points $x\in D$ for which the predicate under consideration is false,  the value zero will be  assigned to $\widetilde{g}$.  A  map $\widetilde \Psi$ as in \eqref{stepfBB1} constructed from one or more predicates will be called an \lq{educated}\rq \ map in the sense that the construction of this map is based on the action of the predicate(s).

\medskip
\noindent
 As explained in detail  in Section~\ref{secsep}, under mild assumptions on $g$,  the above predicates $ \Pb_0$ and $ \Pb_1$ lead to quasi-step maps of type \eqref{stepfBB1},  separating  fixed points of the initial map $ g$.  
As we will see, it is convenient to choose an initial map $g$ satisfying the property of attracting points in $D$ which are sufficiently close to the fixed points. Fortunately such a choice of maps $g$ does not presents any difficulty due to the plethora of iteration maps in the literature enjoying the referred attracting property.   Among them  we will consider  the ce\-le\-bra\-ted New\-ton\--Raphson and Halley maps  since as it is well known, under mild assumptions,  both maps  have at least linear local convergence  and so guaranteeing  the referred attracting property (\cite{traub}, \cite{deuflhard}, \cite{halley},\cite{ostrowski}, \cite{yamamoto}, \cite{verbeke}, and \cite{nerinckx}).  The proofs of how the predicates will lead to quasi-step maps separating fixed points of $g$, given in Section  \ref{secsep}, use mainly the fixed point theorem for closed and bounded real intervals  or the contraction Banach principle (see for instance \cite{ostrowski}, \cite{palais}, \cite{zeidler}). 
 
 \bigbreak
\noindent The  paper is divided in two parts. In the first part (Section \ref{secsep})  the main theoretical results are established and the second part deals with worked examples (Section \ref{worked}). In Section~\ref{secsep} we show under which  conditions on $g$, or on its fixed points, the predicates $\Pb_0$ and $\Pb_1$ will enable the construction of quasi-step maps providing a global separation of the fixed points of $g$ (propositions~\ref{propA} and \ref{cor2}).
 A brief reference on how the composition of quasi-step maps may be implemented to achieve  accurate approximations of  fixed points of $g$ is also made (see paragraph \ref{rfold}).
 
\medskip
\noindent
Section \ref{worked} is devoted to examples illustrating the separation of fixed points by constructing quasi-step maps from the predicates $ \Pb_0$ and $ \Pb_1$.
We  begin with a family of trigonometrical functions $f_k$ presented by Charles Pruitt  in \cite{mico}.  Due to the fact that Pruitt functions only admit integer zeros (cf. Proposition~\ref{proppruitt}) we are able to numerically construct (Example~\ref{exemplo1}) a step map which provides  not only  all the  zeros of $f_3$  but  it also enables to  distinguish composite numbers from prime numbers in the interval of definition of the map. 
Pruitt functions are also used  (Examples~\ref{exemplo2} and \ref{exemplo3}) to illustrate the main features of   several  quasi-step maps   derived from Newton and Halley maps educated by the predicates $\Pb_0$ and $\Pb_1$.   
\noindent
In Example~\ref{exemplo4}  a strongly oscillating transcendental function $f$ is considered.  Certain discretized  versions of  composed Halley educated maps  are applied in order to globally separate a great number of zeros of $f$ producing at the same time  accurate values for them.

 \section{ Separation of fixed points  }\label{secsep}
 
In this section we show how the predicates $\Pb_0$ and $\Pb_1$ will enable the construction of  quasi-step maps of type \eqref{stepfBB1} from a given function $g$.  Given an interval $D=[a,b]$ and a constant $d>0$,  
the predicates to be considered are the following:
\begin{align}\label{predp01}
 \Pb_0:&\;\; x\in D,\, \, \text{ $y=g(x)\in D$ and $ |y-x|<d$},\\ \label{predp11}
 \Pb_1:&\;\; \text{$x\in D$,  $y=g(x)\in D$, $w=g(y)\in D$  and  $|w-y|\leq |y-x|$}.
 \end{align}
 
 \noindent
Note that if $x$ is a fixed point of $g$, the predicates $\Pb_0$ and $\Pb_1$ hold true for $x$. Moreover,  assuming that $g$ is continuous near each fixed point, there are subintervals $J_i$ of $D$, containing fixed points of $g$, where the predicates hold true as well. We now  precise the notion of a quasi-step map.
  \begin{definition}\label{edudef}  
 Let $g:\Rb\rightarrow \Rb$ be a function and $D=[a,b]\subset \Rb$. A quasi-step map associate to $g$ is a function $\widetilde{\Psi}$ of the form
      \begin{equation}\label{stepfBB}
\widetilde \Psi(x)= \sum_{k=1}^s \widetilde g (x) \,{\cal X}_{J_k}(x),\quad \text{if $x\in D$}      
      \end{equation}
 where
 \begin{itemize}
 \item[(a)]  Any subinterval $J_k$ contains at least one fixed point of $g$ and  the union of these intervals  is contained in $D$ (i.e. $J=\cup_{k=1}^s J_k\subseteq D$); 
 \item[(b)]  The function  $ \widetilde g$ is  {\em continuous} on each subinterval $J_k$; 
 \item[(c)]    $ \widetilde g (z)= z$  for any fixed point $z$ of   $g$ belonging to $J_k$. 
 \end{itemize}
 \end{definition}
 
 \noindent
We note that if in the above definition all the subintervals $J_i$ are pairwise disjoint and the number $s$ coincides with the number $n$ of the fixed points of $g$, the quasi\--step map $\widetilde \Psi$ separates all the fixed points of $g$ in $D$.

\medskip
\noindent  For practical purposes we chose either one or both predicates $\Pb_0$, $\Pb_1$ to construct a quasi-step map  as in \eqref{stepfBB}. Such a  map will be called  \lq{educated}\rq\  by the predicate(s) in the sense that the map results from the action of the predicate(s) on the interval $D$.

 \begin{definition}\label{edudefA}  (Educated map)
 
\noindent
Let $g$ be a real\--valued map and consider the interval $D\subseteq \Rb$, ${\cal X}_{B}$ the characteristic function  of a set $B$, $d$ a positive constant, and  $\Pb_0$, $\Pb_1$  the predicates  in \eqref{predp01} and \eqref{predp11}, respectively. Let $\{L_k^i\}$ be the collection of subintervals of $D$ where the predicate $\Pb_i$ ($i=0,1$) holds true. 
 \begin{itemize}
\item[(a)] The  quasi-step map   
$$\widetilde \Psi_i(x)= \sum_{k} {\widetilde g}_i (x) \,{\cal X}_{L^i_k}(x), \quad x\in D$$
 is said to be educated by the predicate $\Pb_i$,  if ${\widetilde g}_i(x)=0$ for all $x\in D\setminus \cup_k L_k^i$,  and on each $L_k^i$:   ${\widetilde g}_i=g$ for $i=0$ and ${\widetilde g}_i=g\circ g$ for $i=1$.
\item[(b)] The quasi-step map $$\widetilde \Psi(x)= \sum_{k} {\widetilde g} (x) \,{\cal X}_{J_k}(x), \quad x\in D$$
 is said to be educated by both predicates $\Pb_0$ and $\Pb_1$, if ${\widetilde g}_i(x)=0$ for $x\in D\setminus \cup_k  (L^0_k\cap L^1_k)$ and 
 $ {\widetilde g}= g\circ g$ in each $J_k= L^0_k\cap L^1_k$.
\end{itemize}
\end{definition}

\noindent Note that a map  educated by the predicate $\Pb_0$ is a map which is necessarely zero at all the $x\in D$ such that $(x,g(x))$ does not lie in a  band of width  $2d$ centered at the line $y=x$. This is the reason  why we will call $d$  the {\em vertical displacement} parameter. On the other hand,  the predicate $\Pb_1$  tests    $y=g(x)\in D$ and $w=g(y)\in D$ satisfying $|w-y|\leq|y-x|$. Since,  for $y\neq w$  the quantity $(w-y)/(y-x)$ represents the slope of the line through the points $(x,y)$ and  $(y, w)$  we call  $\Pb_1$  the {\it slope predicate}.

\medskip
\noindent
We note that the \lq{education}\rq\  of a map is an {\it a priori} global technique (i.e. no initial guesses are required) which may be seen as a counterpart of the classical {\it a posteriori} stopping criteria used in root solvers algorithms for local search of roots (see for instance \cite{nikolajsen} and references therein). 

\noindent
Although in this work we only adopt the predicates $\Pb_0$ and $\Pb_1$, other predicates could be considered in order that the respective educated map will satisfy other criteria such as monotony or alternate local convergence. Also, an \lq{education}\rq\  of the map $f$ instead of the map  $g$ may be of interest, namely in the light of well known sufficient conditions for local monotone convergence of Newton and Halley methods (see for instance \cite{davies}, \cite{melman}).

\bigbreak\subsection{Quasi-step maps  from the predicates $\Pb_0$ and $\Pb_1$  }
 
 \noindent
 In this paragraph we address the question of knowing what kind of functions $g$ may be chosen in order to construct  educated maps from the predicates $\Pb_0$ and $\Pb_1$ leading to a global separation of fixed points of $g$. In particular,  with respect to the predicate $\Pb_0$ we will show that the contractivity of $g$ near each of its fixed points and  a conveniently chosen vertical displacement parameter $d$, provide  quasi-step maps isolating  fixed points of $g$ in $D$. 
In the case of the predicate $\Pb_1$, under mild hypotheses on $g$, this predicate implies the contractivity of $g$ near   fixed points and consequently enabling the construction of an educated map by  $\Pb_1$ isolating  fixed points of $g$ as well. 
 
 \medskip
 \noindent
 In what follows we assume that a real-valued  map $g$ is given, $D$ is the closed interval $D=[a,b]\subset \Rb$, and ${\cal X}_B$ denotes the characteristic function of the set $B$.   We denote by 
 ${\cal Z}\subset D$  the (non-empty) finite  set of the  fixed points of $g$, say $z_1,z_2,\ldots, z_n$.

\begin{lemma}\label{propA}
Let $d$ be a positive number and  $z$  a fixed point of  $g$ belonging to $D=[a,b]$. If $g$   is contractive  in the interval 
$$ I_z=[z-d,z+d]\subseteq D, $$
and the predicate $\Pb_0$ holds true for any point of $I_z$, then
 there exists a bounded closed interval
$$J_z=[z-\epsilon,z+\epsilon]\subseteq I_z,\quad \epsilon>0$$
such that the map 
$$\widetilde \Psi_z(x) = \widetilde{g}_z(x)\, {\cal X}_{J_z} (x)\;\;  x\in D,$$
with $\widetilde{g}_z(x)=0$ if $x \in D\setminus J_z$ and $ \widetilde{g}_z=g$ in $J_z$,
 isolates the (unique) fixed point $z$ of $g$  in $D$.
 \end{lemma}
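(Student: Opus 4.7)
The plan is to reduce the statement to a direct application of the Banach contraction principle on $I_z$, after which the quasi-step properties will follow mechanically from Definition~\ref{edudef}. First I would check that $g$ is a self-map of $I_z$: if $L \in [0,1)$ denotes the contraction constant of $g$ on $I_z$, then for any $x \in I_z$ we have $|x - z| \leq d$ and, using $g(z) = z$,
\[
|g(x) - z| \;=\; |g(x) - g(z)| \;\leq\; L\,|x - z| \;\leq\; L\,d \;<\; d,
\]
so $g(x)$ lies in the interior of $I_z$. Hence $g\colon I_z \to I_z$ is a contraction of the complete metric space $I_z$, and the fixed point theorem quoted in the introduction yields a unique fixed point of $g$ in $I_z$; since $z$ is one such fixed point, it is the only one.

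Next I would pick any $\epsilon \in (0, d]$ and set $J_z = [z - \epsilon,\, z + \epsilon] \subseteq I_z \subseteq D$. The interval $J_z$ contains $z$ and, by the uniqueness just obtained, no other fixed point of $g$; in particular $J_z \cap {\cal Z} = \{z\}$, which is precisely the sense in which $\widetilde{\Psi}_z$ isolates $z$ in $D$. The function $\widetilde g_z$ agrees with $g$ on $J_z$ and is therefore Lipschitz (in particular continuous) there, while $\widetilde g_z(z) = g(z) = z$. Conditions (a), (b) and (c) of Definition~\ref{edudef} are thus satisfied with the single sub-interval $J_z$, so $\widetilde \Psi_z$ is indeed a quasi-step map separating $z$ from every other fixed point of $g$ lying in $D$.

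The predicate $\Pb_0$ is not literally invoked in the uniqueness argument, but it plays the role of aligning this construction with the educated-map framework of Definition~\ref{edudefA}(a): since $\Pb_0$ holds throughout $I_z \supseteq J_z$, the prescription $\widetilde g_z = g$ on $J_z$ coincides on $J_z$ with what the $\Pb_0$-educated map produces, and the extension $\widetilde g_z \equiv 0$ outside $J_z$ is compatible with the educated recipe there. I do not foresee any real obstacle: the only content of the lemma is the self-mapping property $g(I_z) \subseteq I_z$, which is immediate from contractivity combined with $g(z) = z$, and everything else is bookkeeping against Definition~\ref{edudef}; the choice of $\epsilon$ is arbitrary in $(0,d]$.
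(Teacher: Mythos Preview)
Your argument is correct and in fact more direct than the paper's. The paper invokes the hypothesis on $\Pb_0$ to obtain $|g(x)-x|<d$ on $I_z$ and then builds a planar picture: a parallelogram bounded by the lines $y=x\pm d$ and $y=z\pm d$, an inscribed disk of radius $d/\sqrt{2}$ about $(z,z)$, and a cone of half-angle $\arctan K$ containing the graph of $g$; from this geometry it extracts an $\epsilon<d/\sqrt{2}$ with $g(J_z)\subseteq J_z$ and then applies the Banach principle on $J_z$. You bypass all of this by observing that contractivity together with $g(z)=z$ already yields $|g(x)-z|=|g(x)-g(z)|\le L|x-z|<d$, hence $g(I_z)\subseteq I_z$, so the contraction principle applies on the whole of $I_z$ and any $\epsilon\in(0,d]$ works. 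Your route is shorter, gives a larger admissible range for $\epsilon$, and makes transparent your closing remark that $\Pb_0$ is not actually used in the isolation argument itself but only to place the construction within the educated-map framework of Definition~\ref{edudefA}. The paper's geometric presentation, on the other hand, foregrounds the role of the band $|y-x|<d$ and thereby motivates the name ``vertical displacement parameter'' for $d$, even if that hypothesis is logically redundant for the conclusion.
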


\begin{proof}

\noindent First note that the hypothesis on the predicate $\Pb_0$  gives
\begin{equation}\label{sub2}
\begin{array}{l}
 |g(x)-x|< d,\quad x\in I_z.
\end{array}
\end{equation}

\noindent The points  $(x,g(x))$ of the plane satisfying the above  inequality  \eqref{sub2} belong to a closed planar region delimited by the parallel horizontal lines $y=z+d$, $y=z-d$ and the oblique parallel lines $y=x+d$, $y=x-d$. Denote by ${\cal P}$ the parallelogram bounding such  region and note that  the diagonals of ${\cal P}$  intersect  at the point $A=(z,z)$.

\noindent
Let  ${\cal D}$ be the closed disk of radius $r=d/\sqrt{2}$ centered at $A$. This disk  is inscribed in the region delimited by ${\cal P}$.  As by hypothesis  $g$ is contractive in $I_z$,  then $g$ is continuous in this interval. So, there exists $\delta>0$ such that
$$
x\in [z-\delta, z+\delta] \Longrightarrow |g(x)-z|<  r. 
$$
Also by the contractivity of  $g$,  there exists a number $K$, with  $0\leq  K<1$, ($K$ is a contractivity constant) such that  the graph of $g$ lies inside a cone section with vertex at  $A$ and  whose edges make an angle  $|\alpha|=\arctan(K)<\pi/4$  at the vertex $A$. Therefore, there exists a number  $0<\epsilon<r$ such that the square region ${\cal S}=[z-\epsilon]\times [z+\epsilon]\subset \Rb^2$ is contained in the disk ${\cal D}$.
 Moreover, for  the closed interval $J_z=[z-\epsilon,z+\epsilon]$ we have $g(J_z)\subseteq  J_z$. As $J_z$ is closed and  $g$ is contractive in  $J_z$, it follows from the Banach contraction principle that  there is a unique fixed point of $g$ in  $J_z$, which is obviously the point $z$. It is now immediate that the map $\widetilde \Psi_z$ isolates $z$ in $D$. 
\end{proof}

\noindent We now assume that the set ${\cal Z}$ of the fixed points of $g$ is ordered, and  define the {\em resolution} $\eta$ of  ${\cal Z}$ as the minimum of the distances between any pair of consecutive  points of ${\cal Z}$.

\begin{proposition}\label{cor1}
Let  ${\cal Z}=\{ z_1, \ldots, z_n    \}$ be the set of the fixed points of $g$ and  $\eta$ its resolution. Consider $g$ and $\Pb_0$ satisfying the conditions of Lemma~\ref{propA} on each interval $I_{z_i}=[z_i-d,z_i+d]$  with 
$$d<\frac{\eta}{2}.$$

\noindent
Then, there exists a collection of disjoint  subintervals $J_{z_i}\subset I_{z_i}$ with  $J=\cup_{i=1}^{n} J_{z_i}\subset D$, and maps $\widetilde{g}_{z_i}$ defined as    $\widetilde{g}_{z_i}(x)=0$, if $x\in D\setminus J$ and $\widetilde{g}_{z_i}=g$ in each interval $J_{z_i}$,
such that 
$$\widetilde \Psi (x)=
\sum_{i=1}^n \widetilde{g}_{z_i} (x)\, {\cal X}_{J_{z_i}} (x),\quad  \text{$x\in D$}
$$
is an educated map by $\Pb_0$,  separating all the fixed points of $g$ in $D$.

\end{proposition}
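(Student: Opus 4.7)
The plan is to reduce Proposition~\ref{cor1} to $n$ independent applications of Lemma~\ref{propA}, glued together by the fact that the hypothesis $d<\eta/2$ forces the parent intervals $I_{z_i}$ to be pairwise disjoint. Concretely, I would first fix an ordering $z_1<z_2<\cdots<z_n$ of the elements of ${\cal Z}$ so that $\eta=\min_{i}(z_{i+1}-z_i)$. For any pair of consecutive fixed points, the distance between the right endpoint of $I_{z_i}$ and the left endpoint of $I_{z_{i+1}}$ equals $(z_{i+1}-d)-(z_i+d)=z_{i+1}-z_i-2d\geq \eta-2d>0$, so the intervals $I_{z_i}=[z_i-d,z_i+d]$ are pairwise disjoint and all sit inside $D$.

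Next, for each $i$, the hypotheses of Lemma~\ref{propA} hold on $I_{z_i}$, so the lemma produces a closed interval $J_{z_i}=[z_i-\epsilon_i,z_i+\epsilon_i]\subseteq I_{z_i}$ (for some $\epsilon_i>0$) together with a map
$$\widetilde \Psi_{z_i}(x)=\widetilde{g}_{z_i}(x)\,{\cal X}_{J_{z_i}}(x),\qquad x\in D,$$
where $\widetilde{g}_{z_i}=g$ on $J_{z_i}$ and $\widetilde{g}_{z_i}=0$ on $D\setminus J_{z_i}$, isolating $z_i$ as the unique fixed point of $g$ inside $J_{z_i}$. Because $J_{z_i}\subseteq I_{z_i}$ and the $I_{z_i}$ are disjoint, so are the $J_{z_i}$, and the union $J=\bigcup_{i=1}^n J_{z_i}$ lies in $D$.

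Finally, I would assemble the educated map
$$\widetilde \Psi(x)=\sum_{i=1}^n \widetilde g_{z_i}(x)\,{\cal X}_{J_{z_i}}(x),\qquad x\in D,$$
and verify condition by condition that it meets Definition~\ref{edudef}: (a) each $J_{z_i}$ contains (exactly) one fixed point of $g$ and $J\subseteq D$; (b) on each $J_{z_i}$ the summand reduces to $g$, which is continuous there by the contractivity hypothesis of Lemma~\ref{propA}; (c) for each fixed point $z_i$ we have $\widetilde g_{z_i}(z_i)=g(z_i)=z_i$. The disjointness guarantees that at any $x\in D$ at most one term in the sum is nonzero, so the values are unambiguous. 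Since the $J_{z_i}$ are pairwise disjoint and there are exactly $n=|{\cal Z}|$ of them, one per fixed point, $\widetilde\Psi$ separates all the fixed points of $g$ in $D$.

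The only real step of substance is the interval-disjointness computation, which is immediate from $d<\eta/2$; the rest is bookkeeping on top of Lemma~\ref{propA}. Thus I do not anticipate a genuine obstacle beyond being careful that the local $\epsilon_i$ supplied by the lemma can be chosen independently for each $z_i$, which is automatic since the lemma is applied separately on disjoint intervals.
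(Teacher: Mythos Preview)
Your proposal is correct and follows essentially the same approach as the paper's own proof: apply Lemma~\ref{propA} independently at each fixed point to obtain the intervals $J_{z_i}$, use $d<\eta/2$ to guarantee disjointness, and then verify that the assembled map satisfies Definition~\ref{edudef}. Your version is in fact more explicit than the paper's (you actually compute the gap $z_{i+1}-z_i-2d>0$ between consecutive $I_{z_i}$ and check the definition item by item), whereas the paper simply asserts that the $J_{z_i}$ are ``obviously disjoint'' and invokes the definition; no substantive difference.
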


 \begin{proof}  The hypotheses on $g$ and $\Pb_0$ implies that Lemma~\ref{propA} is satisfied  for each fixed point $z_i$. That is:
  (i) there exists a closed subinterval $J_{z_i}=[z_i-\epsilon_{i},z_i+\epsilon_{i}]$ of $I_{z_i}= [z_i-d,z_i+d]\subset D$; 
  (ii) the map  $g$ is continuous in $J_{z_i}$; (iii) the fixed point $z_i$ is the unique fixed point of $g$ in  $J_{z_i}$.
  
  \noindent
  Hence,  as $d<\eta/2$ the subintervals $J_{z_i}$ are obviously disjoint and, by the definition of an educated map by $\Pb_0$ (cf. Definition \ref{edudef}), the map $\widetilde{\Psi}$ is an educated map separating the fixed points of $g$.
\end{proof}

\medskip
\noindent
We now explain the  role of the slope predicate $\Pb_1$  in the construction of a quasi-step map. 
Let us begin with a lemma  showing  that for a given fixed point $z$ verifying  $g'(z)\neq 1$ (i.e. a  {\em non-neutral} fixed point), if the predicate $\Pb_1$ holds in a certain interval containing $z$ it is possible to isolate this fixed point.

\begin{lemma}\label{propB}
Let $z\in D$ be a fixed point of a map $g$ which is of class $C^1$ in the interval  $X_z=[z-\epsilon, z+\epsilon]\subset D$ ($\epsilon>0$). Assume that  $g'(z)\neq 1$   and $\Pb_1$ holds true for all  non-fixed points of $g$ belonging to $ X_z$.
 Then, 
\begin{itemize}
\item[(i)] There is a closed bounded subinterval $\widetilde X_z=[z-\mu, z+\mu]$ of $X_z$ where the map $g$ is contractive.
\item[(ii)] The map
\begin{equation}\label{dec4}
{\widetilde\Psi}_z(x)=\widetilde{g}_{z}(x)\, {\cal X}_{\widetilde X_z} (x),\quad   \text{$x\in D$},
\end{equation}
with $\widetilde{g}_{z}(x)=0$ if $x\in D\setminus \widetilde X_z$ and $\widetilde{g}_{z}= g\circ g$ in $\widetilde X_z$, 
isolates the fixed point $z$. 
\end{itemize}
\end{lemma}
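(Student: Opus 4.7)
The plan is to use the predicate $\Pb_1$, rewritten via the mean value theorem around $z$, to pin down $g'(z)$, then pass from that pointwise bound to uniform contractivity on a neighborhood via continuity of $g'$, and finally invoke the Banach contraction principle on $g\circ g$ to identify $z$ as the unique fixed point of $\widetilde g_z$.

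For $x \in X_z$ with $x \neq z$, set $y = g(x)$. Two applications of the mean value theorem, together with $g(z) = z$, give
\begin{align*}
g(x) - x &= (g'(\zeta) - 1)(x - z), \\
g(y) - y &= (g'(\xi) - 1)\, g'(\zeta')(x - z),
\end{align*}
for some $\zeta, \zeta'$ between $x$ and $z$ and some $\xi$ between $y$ and $z$. Substituting into the inequality $|g(y) - y| \leq |y - x|$ supplied by $\Pb_1$, dividing by $|x - z|$, and letting $x \to z$ (so that $y \to z$ and all intermediate points collapse to $z$) gives $|g'(z) - 1|\,|g'(z)| \leq |g'(z) - 1|$. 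Since $g'(z) \neq 1$ makes the common factor strictly positive, cancellation yields $|g'(z)| \leq 1$, which under the dynamical reading of non-neutrality (also excluding $g'(z) = -1$) I take to be the strict bound $|g'(z)| < 1$.

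Continuity of $g'$ then produces $K \in [0, 1)$ and $\mu \in (0, \epsilon]$ such that $|g'(t)| \leq K$ on $\widetilde X_z = [z - \mu, z + \mu]$. The mean value theorem makes $g$ a $K$-Lipschitz self-map of $\widetilde X_z$ (self-mapping follows from $g(z) = z$ and $K < 1$), which establishes (i). The composition $g \circ g$ is then a $K^2$-contraction on $\widetilde X_z$ with $z$ as its unique fixed point — a putative second fixed point would either be a fixed point of $g$, forbidden by contractivity, or a genuine $2$-cycle, equally forbidden — so $\widetilde g_z = g \circ g$ isolates $z$ inside $\widetilde X_z$ while $\widetilde \Psi_z$ vanishes elsewhere on $D$, which is (ii).

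The main obstacle is bridging the non-strict inequality in $\Pb_1$ and the strict derivative bound needed for contractivity: the limit argument above only yields $|g'(z)| \leq 1$, and the reflection $g(x) = 2z - x$ shows that $g'(z) = -1$ is compatible with both $\Pb_1$ (with equality everywhere) and $g'(z) \neq 1$, yet gives an isometry rather than a contraction. One must therefore either read non-neutrality as $|g'(z)| \neq 1$ or extract strict contractivity by exploiting $\Pb_1$ on a whole neighborhood of $z$ rather than only in the limit.
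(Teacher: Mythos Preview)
Your argument mirrors the paper's almost exactly: the paper computes the same limit $\lim_{x\to z}\dfrac{g(g(x))-g(x)}{g(x)-x}=g'(z)$ via L'H\^{o}pital's rule rather than your direct mean-value expansion, obtains $|g'(z)|\le 1$, and then passes to contractivity on a subinterval $\widetilde X_z$ and uniqueness of the fixed point by the same self-mapping and contraction-principle reasoning you give in your second paragraph.

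The obstacle you flag at the end is genuine, and the paper does not resolve it either: after arriving at $|g'(z)|\le 1$ together with $g'(z)\ne 1$, the paper simply asserts that ``the inequality holds strictly'' and moves on. Your reflection example $g(x)=2z-x$ satisfies every hypothesis of the lemma as stated --- it is $C^1$, has $z$ as its unique fixed point, satisfies $\Pb_1$ with equality at every non-fixed point of $X_z$, and has $g'(z)=-1\ne 1$ --- yet it is an isometry on every neighborhood of $z$, so no subinterval $\widetilde X_z$ on which $g$ is contractive exists. The lemma therefore needs the stronger hypothesis $|g'(z)|\ne 1$ (non-neutrality in the dynamical sense), under which both your argument and the paper's go through cleanly; with only $g'(z)\ne 1$ the conclusion of part~(i) is false as written, and your diagnosis of the gap is correct.
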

\begin{proof}

\noindent
{\it (i)} 
 Note that the inequality in $\Pb_1$   means that $| g(g(x)-g(x)  | \leq | g(x)-x  |   $, for all x in $I_z$ with $x\neq g(x)$. That is,
$$
\left | \displaystyle{ \frac{g(g(x))-g(x)}{g(x)-x}     }  \right|\leq 1 \, \Longleftrightarrow |q(x)|\leq 1,
$$
which implies:
\begin{equation}\label{dec10}
\lim_{x\rightarrow z} | q(x)| \leq 1.  
\end{equation}
Let $N(x)= g(g(x))-g(x)$ and $D(x)=g(x)-x$. Since $g\in C^1(X_z)$ both the functions $N$ and $D$ are of class $C^1$ in $X_z$.  As
\begin{equation}\label{dec8}
N'(x)= g'(x) (g'(x)-1)\quad\mbox{and}\quad   D'(x)=g'(x)-1,
\end{equation}
the  L'Hôpital's rule gives
$$
\lim_{x\rightarrow z} q(x)=  \lim_{x\rightarrow z}  \frac{N'(x)}{D'(x)}=  \lim_{x\rightarrow z}  \frac{g'(x)\, (g'(x)-1)}{g'(x)-1}=\lim_{x\rightarrow z}   g'(x)=g'(z)\neq 1,
$$
where the last two equalities follow from the continuity of $g'$ and the hypothesis $g'(z)\neq 1$.
 Hence, the inequality \eqref{dec10} holds strictly, which proves that $g$ is contractive in an interval centered at $z$, say $\widetilde X_z$.

\noindent
{\it (ii)}  
  Let $\widetilde X_z=[z-\mu,z+\mu]\subseteq X_z$ be  the closed interval $\widetilde X_z$ in item {\it (i)} where $g$ is contractive,  and $0\leq L<1$ a constant of contractivity. That is,
   \begin{equation}\label{dec3}
|g'(x)|\leq L<1,\quad \forall x\in \widetilde X_z.
\end{equation}
Let us prove that  $g(\widetilde X_z)\subseteq \widetilde X_z$. The mean value theorem for derivatives implies that  for all $x\in \widetilde X_z$, there exists $\theta\in (min\{x,z\},max\{x,z\})$ such that
\begin{equation}\label{dec5}
|g(x)-z|=|g'(\theta)|\, |x-z|\leq L\, \mu<\mu, \quad \forall x\in \widetilde X_z,
\end{equation}
where the inequalities  follows from \eqref{dec3}. The above inequality and the definition of   $\widetilde X_z$ imply  $g(\widetilde X_z)\subseteq \widetilde X_z$.
Moreover, as $g(\widetilde X_z)\subseteq \widetilde X_z$ and  $\widetilde X_z$ is a real closed bounded  interval, the  fixed point theorem for bounded closed  intervals guarantees that  $z$ is the unique fixed point of $g$ in $\widetilde X_z$. As $g(\widetilde X_z)\subseteq \widetilde X_z$ then $g(g(\widetilde X_z))\subseteq \widetilde X_z$,  and so the map ${\widetilde\Psi}_z$ in \eqref{dec4} isolates the fixed point $z$ in $D$.

\end{proof}

\noindent
We note that if  the hypotheses  of the above lemma are satisfied for all the fixed points of $g$ then we are able to separate all of them. The precise statement is as follows.
 
\begin{proposition}\label{cor2}
 Let ${\cal Z}=\{z_1,\ldots, z_n\}$ be the set of the fixed points of  a map $g$. Suppose that for each point $z_i$ all the assumptions of  Lemma~\ref{propB} are satisfied in the subintervals $X_{z_i}=[z_i-\epsilon_i, z_i+\epsilon_i]$.  Then, there exists a collection of disjoint subintervals  $K_i=[z_i-\delta, z_i+\delta]$ of $D$, and maps $\widetilde{g}_{z_i}$ with $ \widetilde{g}_{z_i} (x)=0$ if $x\in D\setminus \cup K_i$ and  $ \widetilde{g}_{z_i}= g\circ g  $ in $K_i$, such that the map
\begin{equation}\label{cor21}
\widetilde \Psi (x)=
 \sum_{i=1}^n \widetilde{g}_{z_i} (x)\, {\cal X}_{K_i} (x), \;\; \text{$x\in D$}
\end{equation}
 is an educated map by the predicate $\Pb_1$ separating all the fixed points of $g$.
\end{proposition}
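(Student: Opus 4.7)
The plan is to reduce Proposition \ref{cor2} to $n$ parallel applications of Lemma \ref{propB}, one per fixed point, and then glue the pieces together by shrinking each interval uniformly so that the resulting intervals are pairwise disjoint. This is conceptually the $\Pb_1$-counterpart of what Proposition \ref{cor1} does for $\Pb_0$, with the resolution $\eta$ of $\cal Z$ playing the same role as before.

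First, I would invoke Lemma \ref{propB} for each $i=1,\ldots,n$. Since the hypotheses of that lemma are, by assumption, satisfied on every $X_{z_i}=[z_i-\epsilon_i, z_i+\epsilon_i]$, part \emph{(i)} of the lemma yields a closed subinterval $\widetilde X_{z_i}=[z_i-\mu_i, z_i+\mu_i]\subseteq X_{z_i}$ on which $g$ is contractive, and part \emph{(ii)} guarantees that $g\circ g$ maps $\widetilde X_{z_i}$ into itself with $z_i$ as its unique fixed point there.

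Next, I would set $\mu=\min_{1\leq i\leq n}\mu_i>0$ and choose
$$
0<\delta\leq \min\!\left\{\mu,\,\tfrac{\eta}{2}\right\},
$$
and define $K_i=[z_i-\delta,\,z_i+\delta]$. The bound $\delta\leq\mu$ ensures $K_i\subseteq \widetilde X_{z_i}$, so contractivity of $g$ (and hence contractivity, invariance and isolation of $z_i$ under $g\circ g$) is inherited on each $K_i$ from Lemma \ref{propB}. The bound $\delta\leq \eta/2$ guarantees, by the definition of resolution, that the $K_i$ are pairwise disjoint. Since $g\in C^1(X_{z_i})$ we have $g\circ g$ continuous on $K_i$, so defining $\widetilde g_{z_i}(x)=(g\circ g)(x)$ on $K_i$ and $\widetilde g_{z_i}(x)=0$ on $D\setminus\bigcup_j K_j$ produces exactly the data required by Definition \ref{edudefA}(a) for a map educated by $\Pb_1$.

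Finally, I would assemble $\widetilde\Psi$ as in \eqref{cor21}. Because the indicators ${\cal X}_{K_i}$ have pairwise disjoint supports and each $K_i$ contains the unique fixed point $z_i$ (with $\widetilde g_{z_i}(z_i)=g(g(z_i))=z_i$), the map $\widetilde \Psi$ satisfies all three requirements (a)--(c) of Definition \ref{edudef}, and in particular separates the points of $\cal Z$ in $D$. I do not foresee a genuinely hard step here: the arithmetic is routine and all local analytic work has been discharged by Lemma \ref{propB}. The only delicate point is purely combinatorial, namely making sure that the single $\delta$ chosen globally is simultaneously compatible with the local radii $\mu_i$ coming from the independent applications of the lemma and with the disjointness constraint dictated by $\eta$; the choice $\delta\leq \min\{\mu,\eta/2\}$ handles both at once.
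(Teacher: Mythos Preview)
Your argument is correct and follows essentially the same route as the paper: apply Lemma~\ref{propB} at each $z_i$ to obtain the radii $\mu_i$, shrink to a common $\delta$, and check the conditions of Definitions~\ref{edudef} and~\ref{edudefA}. The paper simply takes $\delta=\min_i\mu_i$ and asserts disjointness of the $K_i$, whereas your additional constraint $\delta\le\eta/2$ makes that disjointness explicit and is, if anything, the cleaner justification.
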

\begin{proof}
By Lemma~\ref{propB} there are closed subintervals $\widetilde{X}_{z_i}=[z_i-\mu_i, z_i+\mu_i]$ each of them containing a unique  fixed point $z_i$ of $g$. Taking $\delta= \min\{\mu_1,\ldots,\mu_n\}$ and  $K_i=[z_i-\delta, z_i+\delta]\subset \widetilde{X}_{z_i}$,  the collection of the intervals $K_i$ is disjoint and the union of these intervals is contained in $D$. Also, by the proof of Lemma~\ref{propB} the function $g$ is continuous on each $K_i$ (due to the contractivity of $g$), and so $g\circ g$ is continuous on each $K_i$. Therefore the map $\widetilde \Psi$ in \eqref{cor21}  satisfies all the conditions of the definition of  an educated map by $\Pb_1$ and clearly separates all the fixed points $z_i$ of $g$.
\end{proof}
 
\subsubsection*{Composition of quasi\--step maps}\label{rfold}

 It is easy to see that the composition of a quasi-step map with itself is again a quasi-step map. Moreover, if the function $\widetilde{g}$ entering in the definition \eqref{stepfBB} of a quasi-step map $\widetilde \Psi$, is contractive in each interval $J_i$ it is natural that  compositions
of $\widetilde \Psi$ with itself will provide  better approximations of the fixed points of $g$ than  $\widetilde{\Psi}$.
 Since in Example~\ref{exemplo4} we use  compositions of Halley educated maps by $\Pb_1$, let us briefly explain some of their  features.

\noindent
Let  $g^r$ denote the $r$-fold composition of the map $g$ with itself, that is $g^r=(g\circ g\circ\cdots\circ g \circ g)$, where it is considered $r$ compositions of $g$ and  $g^1=g$.
 Consider as before ${\cal Z}=\{z_1, \ldots, z_n\}$ be the set of fixed points of a map $g$ and a quasi-step map 
     \begin{equation}\label{stepfBB1AA}
\widetilde \Psi(x)= \sum_{k=1}^n \widetilde g (x) \,{\cal X}_{J_k}(x),\quad \text{if $x\in D$}      
      \end{equation}
      with $\widetilde g$ a contractive map on each  interval $J_k=[z_k-\delta,z_k+\delta]\subset D$ and $\widetilde{g} (z_i)=z_i$, for all $i=1,\ldots, n$.
      
  \noindent Due to the contractivity of $\widetilde g$ on each $J_k$, there exists a constant $ 0\leq L<1$ such that   
  $$|\widetilde{g}^r(x)-z_k| \leq L^r |x-z_k|, \,\, x\in J_k.$$
This implies that the values of $\widetilde \Psi^r$ on each subinterval $J_k$ are closer to $z_k$ than the values of $\widetilde \Psi$ in the same interval.

 \noindent 
We remark that if $g$ is a map satisfying all the conditions of Proposition~\ref{cor2}, then the respective map  $\widetilde \Psi$ educated by the predicate $\Pb_1$, is of the form \eqref{stepfBB1} with $\widetilde{g}$ contractive on each interval $J_k$. Therefore, for $r>1$, the map $\widetilde \Psi^r$  should be considered a map,  in $D$,  closer to a step map  than the map $\widetilde{\Psi}$.
  
%
%
 
\noindent

  
\section{Examples}\label{worked}

\noindent
We  present several examples illustrating our procedures for global separation of roots. The first set of examples deal with a family of functions $f_k$ which we name  Pruitt  functions (see Definition \ref{pruittdef}), and the second set (Example \ref{exemplo4}) with  a strongly oscillating transcendental function.

\noindent
 In Example \ref{exemplo1} we obtain a step map for the Pruitt function $f_3$  and in the remaining  examples  appropriate quasi\--step maps are constructed based on the Newton and Halley maps  educated by  one or both the predicates $\Pb_0$ and $\Pb_1$. For convergence analysis and historical developments of the Newton and Halley methods we refer, for instance, to \cite{melman}, \cite{halley}, \cite{amat} and \cite{yamamoto}.

\medskip
\noindent
As before,  the equation $x=g(x)$ is a fixed point version of  $f(x)=0$. The set of fixed points of $g$, ${\cal Z}$, is generally unknown and we aim to find its elements in a given interval $D$. In all the examples no initial guesses of the fixed points are required. We will consider for $g$ either the Newton  ${\cal N}_f$ or the Halley ${\cal H}_f$ maps associated to $f$.  These maps are defined as follows.

\begin{definition}\label{def32} (Newton and Halley maps)

\noindent
Given a sufficiently differentiable map $f$, the Newton and Halley maps associate to $f$ are respectively:
\begin{equation}\label{eqhalley1}
\begin{array}{l}
{\cal N}_f(x)=x-f(x)/f'(x)\\
 {\cal H}_f(x)= x-\displaystyle{ \frac{ 2 f(x)f'(x)}{ 2\, f'(x)^2- f(x)\, f''(x)}} .
 \end{array}
 \end{equation} 
\end{definition}

\medskip

\noindent
It is well known (see  \cite{melman, ostrowski, proinov} and references therein) that if $f$ is sufficiently smooth on the interval  $D$, both ${\cal N}_f$  and ${\cal H}_f$ share the following properties: (a) for simple zeros of $f$, ${\cal N}_f$  and ${\cal H}_f$ have local order of convergence $p>1$; (b) for multiple zeros of $f$,   ${\cal N}_f$  and ${\cal H}_f$ have local linear convergence, (i.e. $p=1$). In particular, for simple zeros the order of convergence of ${\cal N}_f$ and of ${\cal H}_f$ is respectively $p\geq 2$ and $p\geq 3$.
 \noindent
In the light of our purposes,  both ${\cal N}_f$  and ${\cal H}_f$  satisfy a very convenient property concerning the zeros of $f$: (i) If $z$ is a simple zero of $f$, then $z$ is locally super attracting fixed point of  ${\cal N}_f$ and of ${\cal H}_f$. (ii) If $z$ is a zero of $f$ of higher multiplicity, then $z$ is  locally  an attracting fixed point of ${\cal N}_f$ and of ${\cal H}_f$. In both cases $g'(z)\neq 1$ and so one of the assumptions
 in Lemma~\ref{propB} is automatically satisfied.

\noindent
We note that besides ${\cal N}_f$  and ${\cal H}_f$ one could choose any other iteration map $g$ from the plethora of maps  in the literature, even with  greater order of convergence. For instance, the family of Halley maps presented in  \cite{wang} which has maximal order of convergence, or the recursive family of iteration maps in  \cite{MG} obtained from quadratures which has arbitrary order of convergence.
 
 \subsection{Step and quasi\--step maps for Pruitt's functions}\label{secpruitt}

\noindent
In this paragraph we consider a family of functions introduced by Pruitt in \cite{mico}  to illustrate our construction of step and quasi-step maps as tools for a global separation of roots. Due to the peculiar fact that these functions only admit integer roots,  we begin by showing that in this case  we are able  to present a step map which provides all the zeros of a Pruitt's function in a given interval.
  
\begin{definition}\label{pruittdef} (Pruitt function)

\noindent
  Let   $k$ be a positive integer. The $k$th Pruitt function $f_k$ is defined by
\begin{equation}\label{pruitt1}
f_k(x)= \prod_{i=1}^{k}  \sin\left(\displaystyle{\frac{x\, \pi}{p_i}}\right), \quad \mbox{for}\quad x\in I_k=[3/2,p_k^2+1/3],
\end{equation}
where $p_i$ denotes the $i$th prime number.
\end{definition}

%
%
 
 \subsubsection*{Step maps for Pruitt functions}

  \noindent
 In what follows we denote by $[x]$ the {\em closest integer} to $x\in\Rb$. The next proposition gives a step map of a Pruitt function.
  
\begin{proposition}\label{proppruitt}
Let $f_k$ be the $k$th Pruitt function defined on $I_k$ as in  \eqref{pruitt1},  $[\, \cdot\,]: I_k\rightarrow \Zb$ the closest integer function, and  $\Psi_k: I_k\rightarrow I_k$ the map defined by
\begin{equation}\label{proppruittA}
\Psi_k(x)=
\left\{
\begin{array}{ll}
[x],& \mbox{if}\quad f_k([x])=0\\
0,& \mbox{if}\quad f_k([x])\neq 0,
\end{array}
\right.
\end{equation}
 Then, 
\begin{itemize}
\item[(i)] The set ${\cal Z}$ of the fixed points  of  $\Psi_k$ coincides with  the set of  zeros of $f_k$, and it is given by   ${\cal Z}= {\cal P}_0\cup {\cal M}$, with 
$$
{\cal P}_0=\{ p: \,\, \text{ $p$ is prime and $2\leq p\leq p_k$} \},
$$
$$
{\cal M}=\{ s:\,\,  \text{$s$ is a multiple of an element of ${\cal P}_0$ and $ p_k< s \leq p_k^2$} \}.
$$
\item[(ii)] $\Psi_k$ is a step map in $I_k$ separating all  the zeros of $f_k$ in the interval $I_k$.
\item[(iii)] The  zeros of $\Psi_k$ which are integers are the primes $p$ such that $ p_k< p \leq p_k^2$.
\end{itemize}
\end{proposition}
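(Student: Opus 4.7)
The plan is to treat the three claims in sequence, with (i) carrying the main number-theoretic content, (ii) reading off the step-map form from the closest-integer partition, and (iii) following as a short divisor argument from (i).

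For (i), I would locate the zero set of $f_k$ on $I_k$. Since $f_k(x)=\prod_{i=1}^{k}\sin(x\pi/p_i)$, we have $f_k(x)=0$ if and only if some factor vanishes, equivalently $x/p_i\in\Zb$ for some $i$, equivalently $x$ is a positive integer divisible by at least one prime $p_i\leq p_k$. Intersecting with $I_k$, the zeros of $f_k$ therefore form a subset of $\Zb\cap[2,p_k^2]$, and one matches these with $\mathcal{P}_0\cup\mathcal{M}$: the primes $p_1,\ldots,p_k$ are precisely the elements of $\mathcal{P}_0$, while every further zero, being a multiple of some $p_i\in\mathcal{P}_0$ in $(p_k,p_k^2]$, lies in $\mathcal{M}$, and conversely each element of $\mathcal{P}_0\cup\mathcal{M}$ is by construction a zero of $f_k$. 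The fixed-point identification then follows immediately: $\Psi_k$ takes only integer values, and since $0\notin I_k$, any solution of $\Psi_k(x)=x$ must satisfy $x=[x]\in\Zb\cap I_k$ together with $f_k(x)=0$, while the converse is built into \eqref{proppruittA}.

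For (ii), I would use the fact that $[\cdot]$ is constant on each half-open interval $H_j:=[j-1/2,j+1/2)\cap I_k$ around an integer $j\in\Zb\cap I_k$. On each $H_j$ the map $\Psi_k$ is constantly $j$ when $f_k(j)=0$ and constantly $0$ otherwise. Enumerating $\mathcal{Z}=\{z_1,\ldots,z_n\}$ and setting $I_i:=H_{z_i}$, this rewrites as $\Psi_k(x)=\sum_{i=1}^{n} z_i\,\mathcal{X}_{I_i}(x)$, matching the step-map form \eqref{stepfA}. Because consecutive integers differ by at least one, the intervals $I_i$ are pairwise disjoint, and each $I_i$ contains the unique fixed point $z_i$, so $\Psi_k$ separates all zeros of $f_k$ in $I_k$.

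For (iii), the integer zeros of $\Psi_k$ in $I_k$ are exactly those $n\in\Zb\cap I_k$ with $f_k(n)\neq 0$, i.e. with no prime factor in $\{p_1,\ldots,p_k\}$. I expect the only real obstacle to be the short number-theoretic step showing that any such $n$ with $n\leq p_k^2$ must itself be prime: if $n$ were composite with smallest prime factor $q$, the hypothesis would force $q>p_k$, and since every other prime factor of $n$ is at least $q$ one would obtain $n\geq q^2>p_k^2$, contradicting $n\leq p_k^2$. Hence $n$ is prime with $n>p_k$, and conversely every prime in $(p_k,p_k^2]$ has only itself as prime factor, which is above $p_k$, so satisfies $\Psi_k(n)=0$. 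This yields the stated characterization.
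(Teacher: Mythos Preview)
Your proposal is correct and follows essentially the same route as the paper: identify the zeros of $f_k$ from the sine factors, match fixed points of $\Psi_k$ with integers $x=[x]$ satisfying $f_k(x)=0$, and read off the step-map structure from the closest-integer partition of $I_k$. Your handling of (iii) is in fact more complete than the paper's, which simply declares the conclusion ``immediate''; you supply the underlying divisor argument (a composite $n\leq p_k^2$ with no prime factor $\leq p_k$ would satisfy $n\geq q^2>p_k^2$), which is precisely the sieve-of-Eratosthenes step the paper alludes to only in a subsequent remark.
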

\begin{proof}
\noindent
(i)-(ii)  Noting that the solutions of $\sin(\pi\,x/p)=0$ are $x=r\,p$ (with $r\in \Zb$), it follows from  the definition of $f_k$  that a zero of $f_k$  is an integer which belongs either to the set of primes ${\cal P}_0$ or to the set ${\cal M}$ of the multiples of the  elements in ${\cal P}_0$. As the fixed points of $\Psi_k$ are the integers of $I_k$ which are zeros of $f_k$, then the fixed point set of $\Psi_k$  is precisely ${\cal Z}={\cal P}_0 \cup{\cal M}$. 

\noindent
It is now obvious that \eqref{proppruittA}  can be written as the step map
$$\Psi_k(x)=\sum_{i=1}^{p_k^2-1} c_i\, {\cal X }_{I_i} (x), \quad x\in \Rb, $$
where $c_i$ is a non-negative integer and the intervals $I_i$ are defined as
$$
\begin{array}{ll}
I_i&=[i+1/2,i+3/2],\,\,\, \text{for $i=1,\ldots, p_k^2-2$}\\
I_{p_k^2-1}&= [p_k^2-1/2,p_k^2+1/3],
\end{array}
$$
with $I= \cup_{i=1}^{p_k^2-1} I_i$, which means that $\Psi_k$ separates the fixed points of $f_k$.

\noindent
(iii) From the proof of the previous items it is immediate that  the integers in $I_k\setminus {\cal Z}$ are the prime numbers $p$ belonging to $[p_k-1/2, p_k^2+1/3]$, that is $p_k\leq p<p_k^2$.
\end{proof}

\noindent
We remark that from the above proposition the zeros of $f_k$ in $I_k=[2,p_k^2]$ are  the first $p_k$ primes, $2, 3, \ldots, p_k$, and all their multiples which are less or equal to $p_k^2$. Furthermore,  any integer $j$  with   $p_k<j\leq p_k^2$, is a {\it composite} number if $f_k(j)=0$ and is a prime  number  if $f_k(j)\neq 0$.  
 
\medskip
\noindent
In the following example the step map given in  Proposition~\ref{proppruitt} is constructed for the 3th Pruitt function.
 \begin{example}\label{exemplo1}
\end{example}

 \begin{figure}[hbt] 
\begin{center}
 \includegraphics[scale=0.40]{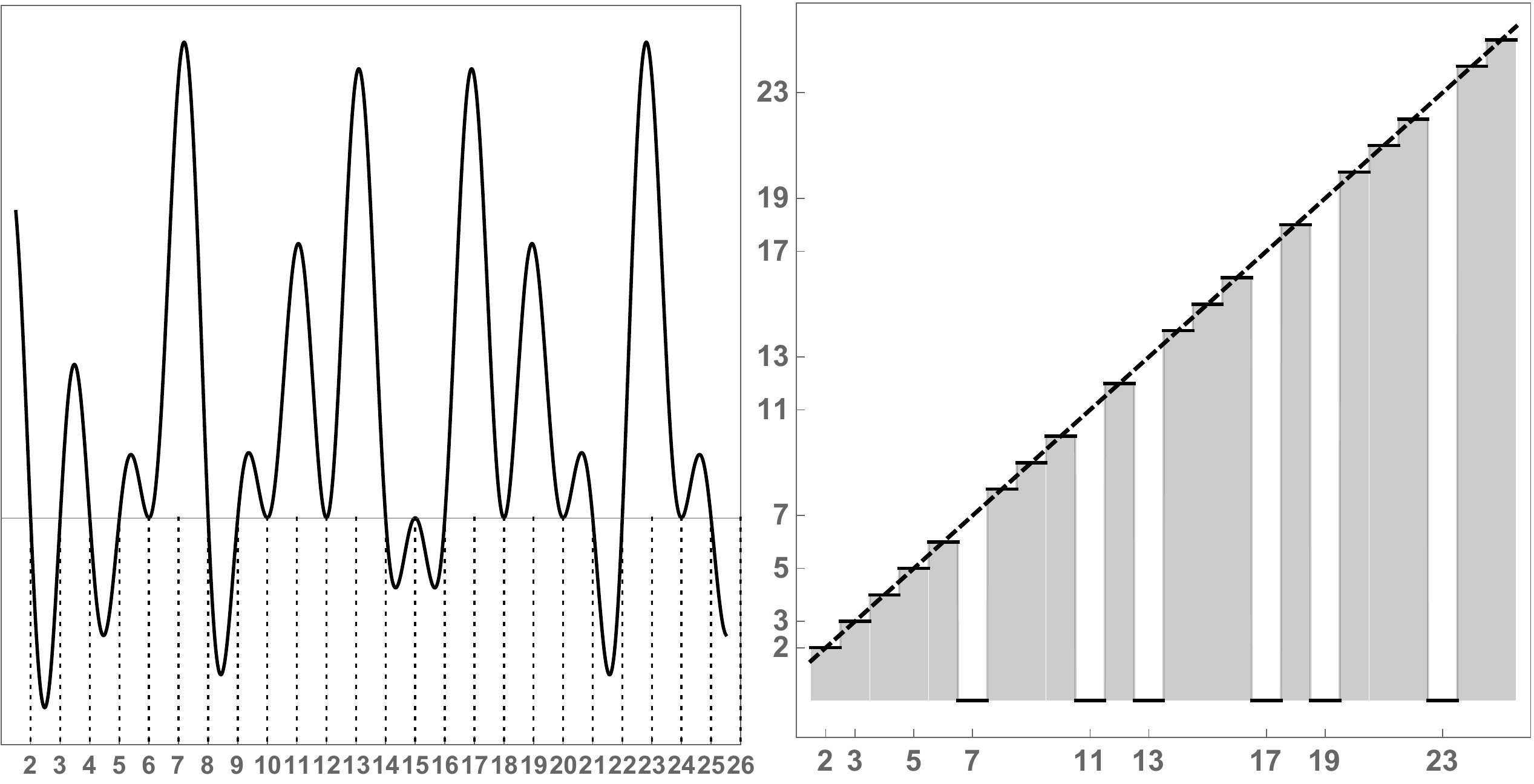}
 \caption{On the left the graphic of the Pruitt function $f_3$ on  $D=[1.5,25.5]$. On the right, the graphic of the step map $\Psi_3$ with the line $y=x$ (dashed). \label{fig1alfa}}
\end{center}
\end{figure}
\noindent
Let us consider $k=3$ and the  Pruitt function $f_3$:
$$
f_3(x)= \sin(x\, \pi/2)\, \sin(x\, \pi/3)\, \sin (x\, \pi/5),\qquad x\in I_3= [2-1/2,25+1/3].
$$
The system {\sl Mathematica}  \cite{wolfram1} has been used to compute the step map $\Psi_3$ defined    in \eqref{proppruittA}. For this prupose, the  built-in system function $Round[\, ]$ was used as the closest integer function.

\medskip
\noindent
The graphics of $f_3$ and $\Psi_3$ are displayed in Figure~\ref{fig1alfa}. Projecting the graph of $\Psi_3$ onto the $y$-axis we obtain the zeros of $f_3$. In fact, as predicted by Proposition~\ref{proppruitt}, this set is ${\cal Z}={\cal P}_0\cup {\cal M}$, with 
\begin{equation}\label{rootsf3}
{\cal P}_0= \{ 2,3,5 \},\quad {\cal M}= \{4,6,8,9,10,12,14,15,16,18,20, 21, 22,24,25\},
\end{equation}
where ${\cal P}_0$ is the set of the first $3$ primes and ${\cal M}$ the set of composite numbers which are multiples of the primes in ${\cal P}_0$.
Moreover,  by  Proposition~\ref{proppruitt}-{\it(iii)},   the step map $\Psi_3$ may also be used to detect the prime numbers greater than $5$. Its clear from the graphics of $\Psi_3$ in Figure~\ref{fig1alfa},  that these prime numbers are the integers (in the $x$-axis) which belong to those intervals where $\Psi_3$ is equal to zero. These primes are in the set ${\cal P}= \{7,11,13,17,19, 23\}$. 
 
 \begin{remark}\label{rem2} The sieve of Eratosthenes is an algorithm providing  the prime numbers less than a given integer $n\geq 2$. One of the versions of this algorithm computes precisely  the set of primes  ${\cal P}$ from the set ${\cal P}_0\cup {\cal M}$ in Proposition~\ref{proppruitt}. Thus,  the step function \eqref{proppruittA} may be seen as another computational version of the Eratosthenes algorithm. We refer to \cite{oneill} for a discussion of efficient practical versions  of the sieve of Eratosthenes using appropriate data structures.
 \end{remark}

 \subsubsection*{Halley and Newton quasi\--step maps for Pruitt functions}\label{subhalley}

In the examples  below we illustrate the construction of Halley and Newton quasi\--step maps, educated by the predicates  $\Pb_0$ and $\Pb_1$,
 for the Pruitt family. Although such construction does not depend on an {\em a priori} knowledge of the multiplicity of the zeros of the  function, we  note that for a Pruitt function $f_k$ we are dealing with both simple and multiple zeros in the interval $I_k$. In fact, the multiplicity of the zeros of $f_k$ satisfy an interesting property: on $I_k$ there are zeros of multiplicity $m$, where $m$ takes all the integer values $m=1,2,\ldots, k-1$. This property follows from the particular form of $f_k$, and although it can be proved in full generality we briefly explain it through a particular example, namely  with the  Pruitt function $f_4$.

\medskip
\noindent
The set of the first four primes is ${\cal P}_4=\{ 2,3,5,7  \}$.  Let ${\cal M}_1$ be the set of all the multiples of the elements of ${\cal P}_4$ which belong to the interval $I_4=[4-1/2, 49+1/3]$. That is,
$$
\begin{array}{ll}
{\cal M}_1&=\{2,3,4,5,6,7,8,9,10,12,14,15,16, 18,20,21,22,24,\\
&\hspace{5mm} 25,26,27,28,30,32,33,34,35,36,38,39,40,42,44,45,46,48,49\}.
\end{array}
$$
The elements of ${\cal M}_1$ are zeros of $f_4$ having multiplicity at least one. Consider now the $6=\binom{4}{2}$ numbers which are products of two elements of ${\cal P}_4$. These products are
$$
s_1=2\times 3, \, s_2=2\times 5, \, s_3=2\times 7, \, s_4=3 \times 5, \, s_5=3\times 7, \, s_6=5\times 7.
$$
Let ${\cal M}_2$  be the set of the multiples of $s_1$, $s_2, \ldots, s_6$ which belong to $I_4$:
$$
\begin{array}{ll}
{\cal M}_2&=\{6,10,12,14,15,18,20,21,24, 28,30,35,36,40,42,45,48\}.
\end{array}
$$
The elements of ${\cal M}_2$  are zeros of $f_4$ with multiplicity at least two. Thus, the simple zeros of $f_4$ belong to the set ${\cal Z}_1={\cal M}_1\verb+\+{\cal M}_2$, namely to
$${\cal Z}_1= \{ 2,3,4,5,7, 8,9,16,22,25,26,27,32,33,34,38,39,44,46,49  \}. $$
Taking the products of three elements of ${\cal P}_4 $ we obtain
$$
s_1=2\times 3\times 5, \, s_2=2\times 3\times 7, \, s_3=2\times 5\times 7, \, s_4=3 \times 5\times 7.
$$
The set ${\cal M}_3$  of all the multiples of $s_1$ to $s_4$ belonging to $I_4$ is
$$
\begin{array}{l}
{\cal M}_3=\{s_1,s_2\}=\{30,42\}.
\end{array}
$$
As all the elements of ${\cal M}_3$  have multiplicity greater or equal to three, then  ${\cal Z}_2={\cal M}_2\setminus {\cal M}_3$ is the set of double zeros of $f_4$:
$${\cal Z}_2= \{ 6,10,12,14, 15, 18, 20, 21, 24, 28, 35, 36, 40, 45, 48 \} \; \text{(double zeros)} $$
Finally, as the product of all the elements of  ${\cal P}_4$ is $2\times3\times5\times 7>49$, there are not zeros of multiplicity greater or equal to  four in $I_4$. This means that
$${\cal Z}_3={\cal M}_3=\{30,42\} \,\, \mbox{(triple zeros)}, $$
is the set of the  zeros of $f_4$ with multiplicity $m=3$ in the interval $I_4$.

\medskip
\noindent
Note that for the function $f_3$ treated in Example \ref{exemplo1}, either using the previous reasoning or just by the analysis of  the graphic of $f_3$ in Figure~\ref{fig1alfa}, we draw the following conclusions:
\begin{itemize}
\item The set of simple zeros of $f_3$ is:
\begin{equation}\label{es1}
\{  2,3,4,5,8,9,14,16,21,22,25 \}.
\end{equation}
\item The set of double zeros of $f_3$ is:
\begin{equation}\label{es2}
\{  6,10,12,15,18,20,24 \}.
\end{equation}
\end{itemize}

\medskip
\noindent The following two  examples illustrate several features of the   Halley and Newton quasi\--step maps, associated to Pruitt functions, educated by the predicates $\Pb_0$ and $\Pb_1$ in  \eqref{predp01} and \eqref{predp11} respectively. A quasi-step map educated by the predicate $\Pb_0$ will be denoted by $\widetilde{\Psi}_0$ and by $\widetilde{\Psi}_1$ in the case of $\Pb_1$.

 \begin{example}\label{exemplo2} (Halley quasi\--step maps for $f_3$)
 \end{example}
 
 \noindent
 Let $f_3$ be the 3th Pruitt function restricted to the interval $D=[1.5, 8.5]$ and $g={\cal H}_{f_3}$ the respective Halley map.  Let  $\widetilde{\Psi}_0$ be a Halley map  educated by the predicate $\Pb_0$.

   \begin{figure}[hbt] 
\begin{center}
  \includegraphics[scale=0.30]{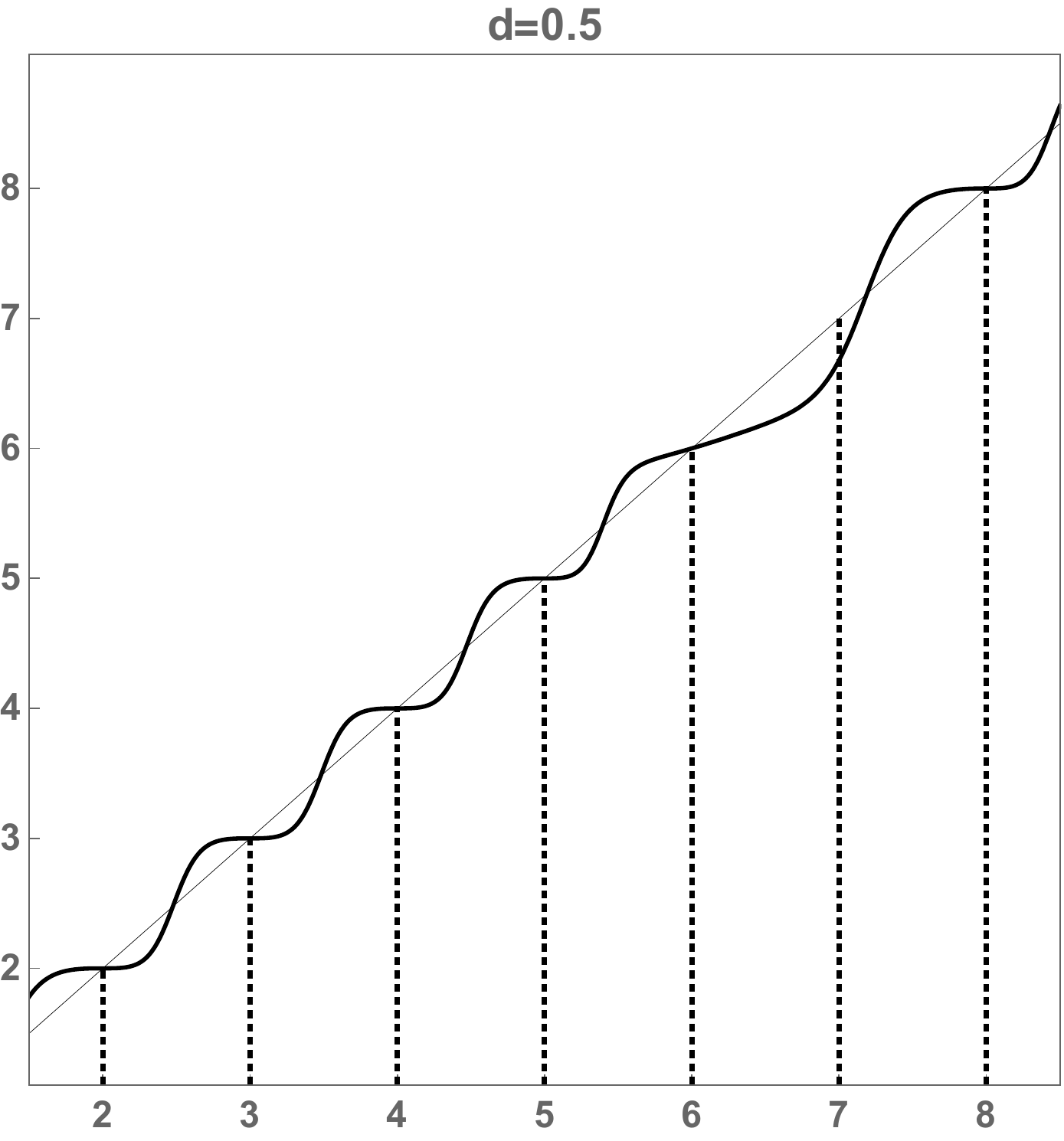} 
 \caption{ The    Halley educated  map $\widetilde{\Psi}_0$  for $d=0.5$ and $D=[1.5,8.5]$.\label{figAB}}
\end{center}
\end{figure}

\begin{figure}[hbt] 
\begin{center}
  \includegraphics[scale=0.30]{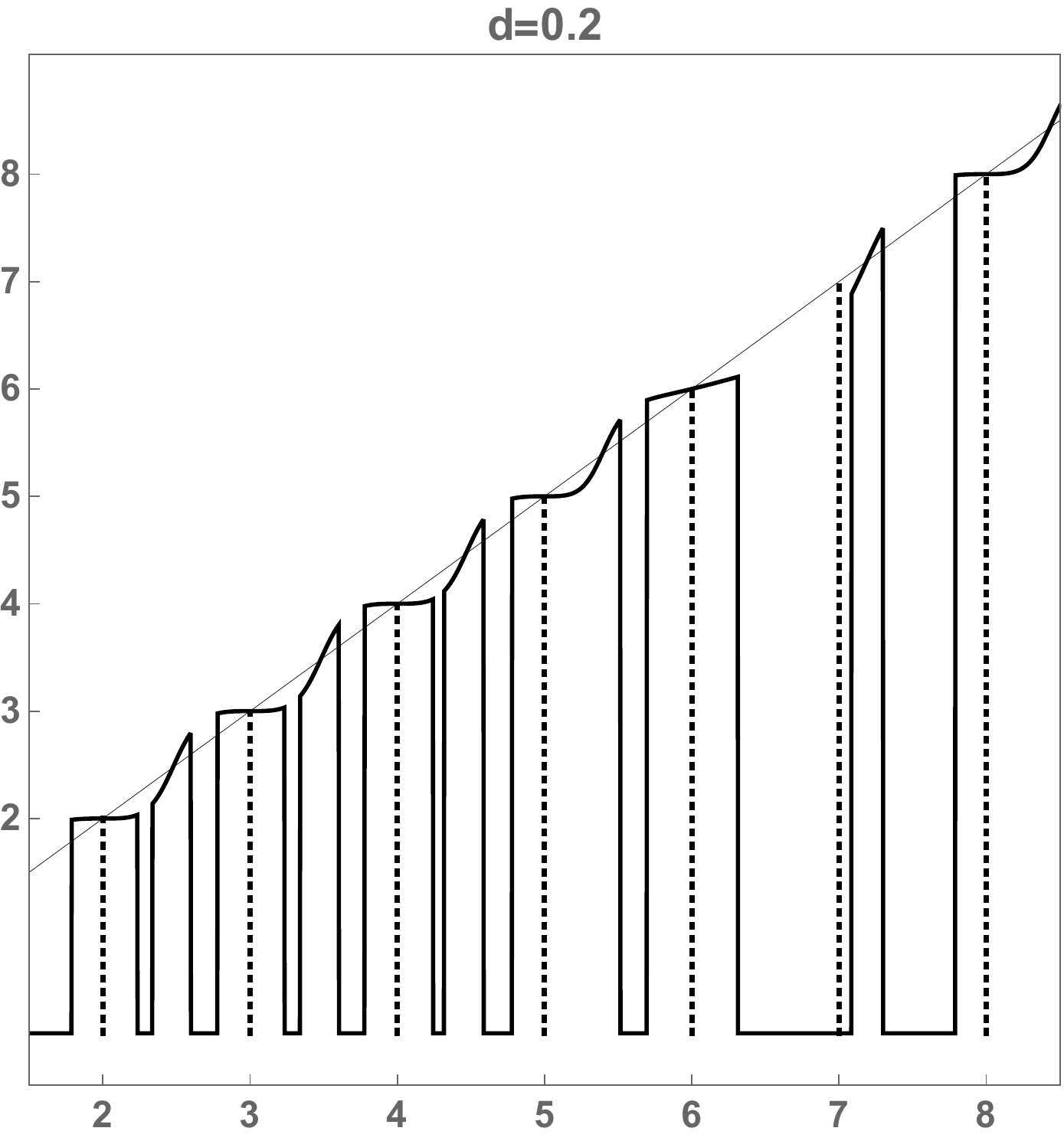}\,   \includegraphics[scale=0.30]{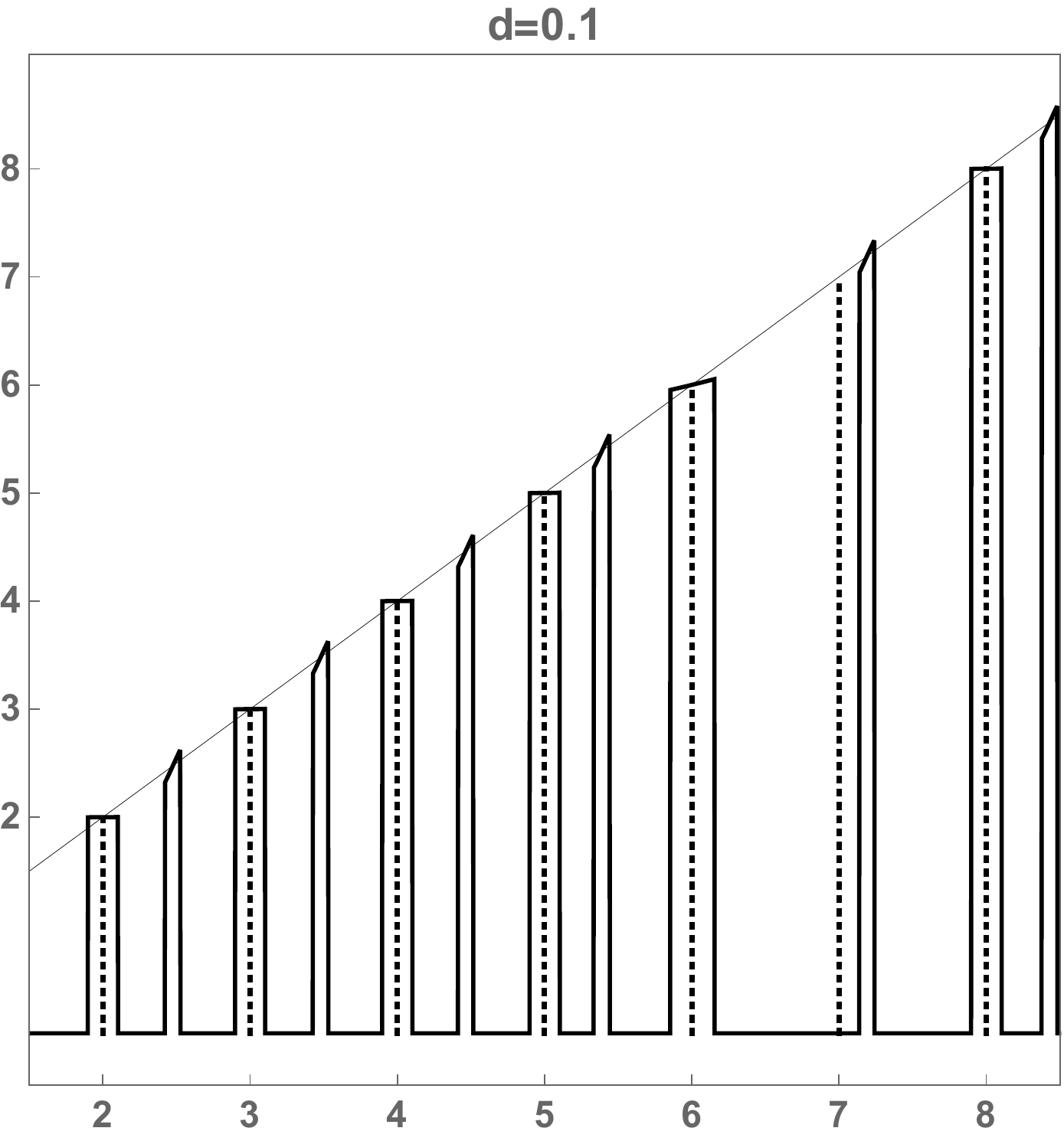}
 \caption{(A) Left,  the Halley educated map $\widetilde{\Psi}_0$ for $d=0.2$;(B) Right,   the Halley educated map $\widetilde{\Psi}_0$   for $d=0.1$. \label{figCD}}
\end{center}
\end{figure}

\noindent
Recall, from \eqref{es1} and \eqref{es2}, that in $D$ all the zeros of $f_3$ are simple except $z=6$ which is a double zero. So, $2,3,4,5,8$ are super attracting fixed points of  ${\cal H}_{f_3}$ while $z=6$ is only attracting.  
 Between two consecutive zeros of $f_3$ there is exactly   one repelling fixed point  of ${\cal H}_{f_3}$. This means that ${\cal H}_{f_3}$ has fixed points which are not roots of $f_3(x)=0$. Moreover, the fixed points of  ${\cal H}_{f_3}$ which are zeros of $f_3$ are precisely the attracting fixed points of  ${\cal H}_{f_3}$ as it can be observed in  Figure~\ref{figAB}. The resolution of the set of the attracting fixed points of ${\cal H}_{f_3}$  is $\eta=1$. By  Proposition~\ref{cor1},  if a vertical displacement $d$ is chosen such that $d< \eta/2=0.5$, then $\widetilde{\Psi}_0$ must separate the attracting fixed points of ${\cal H}_{f_3}$ (i.e. the zeros of $f_3$) in $D$. 
 This does not mean that $\widetilde{\Psi}_0$ separates all the fixed points of ${\cal H}_{f_3}$, as it is clear from Figure~\ref{figCD}-(A) where the graphic of $\widetilde{\Psi}_0$ for $d=0.2$ shows that  in the interval containing $5$ there are two fixed points of ${\cal H}_{f_3}$.

\noindent In  Figure~\ref{figAB} and Figure~\ref{figCD}-(B) we show the graphics of the quasi-step map $\widetilde{\Psi}_0$ obtained for the vertical displacement $d=0.5$ and $d=0.1$. We see that for   $d=0.5$, the educated map $\widetilde{\Psi}_0$  coincides with ${\cal H}_{f_3}$ while for $d=0.1$ all the fixed points of ${\cal H}_{f_3}$ (not just the attracting fixed ones)  have been separated in $D$.

 \begin{figure}[hbt] 
\begin{center}
 \includegraphics[scale=0.3]{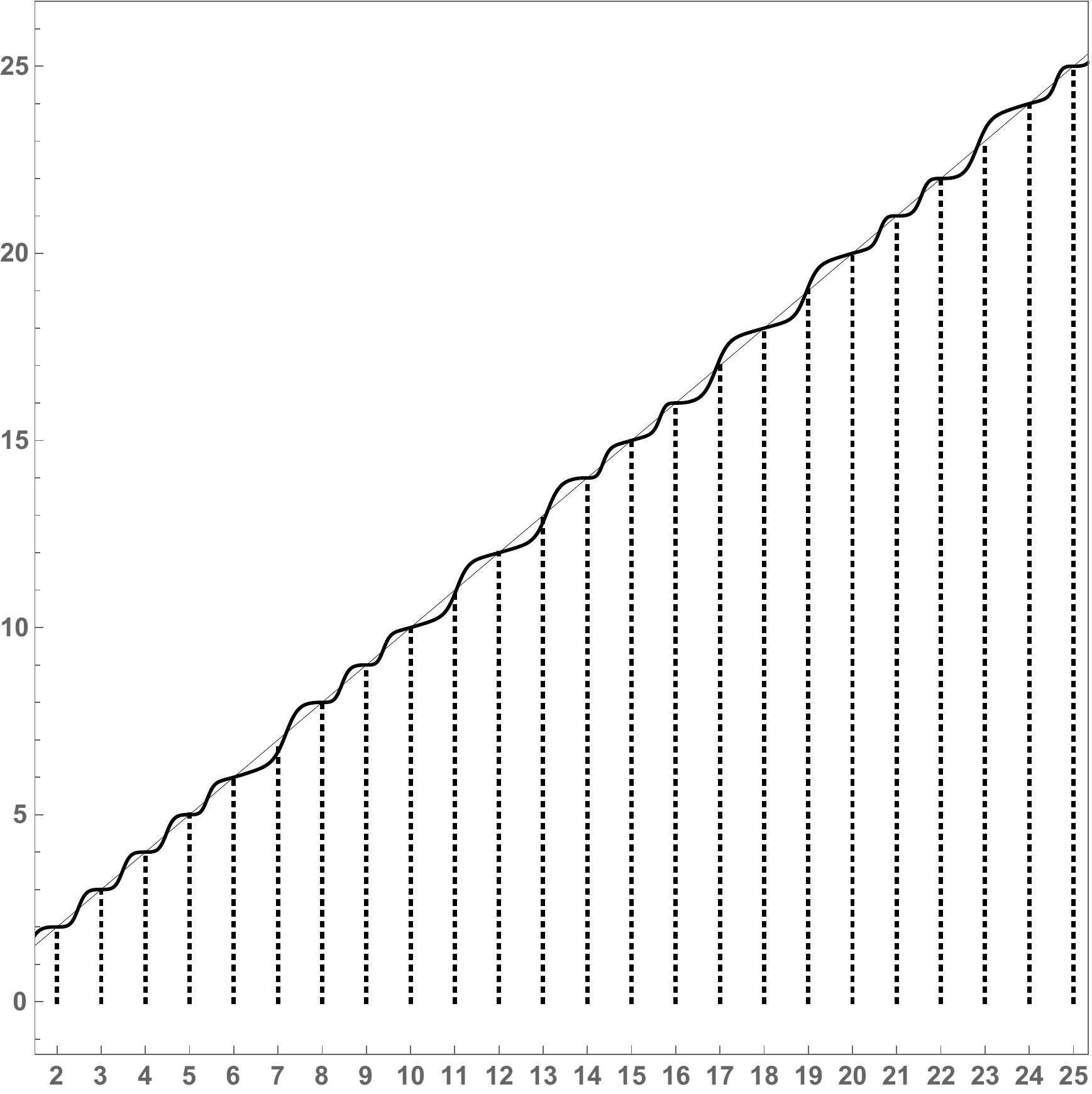}\, \includegraphics[scale=0.3]{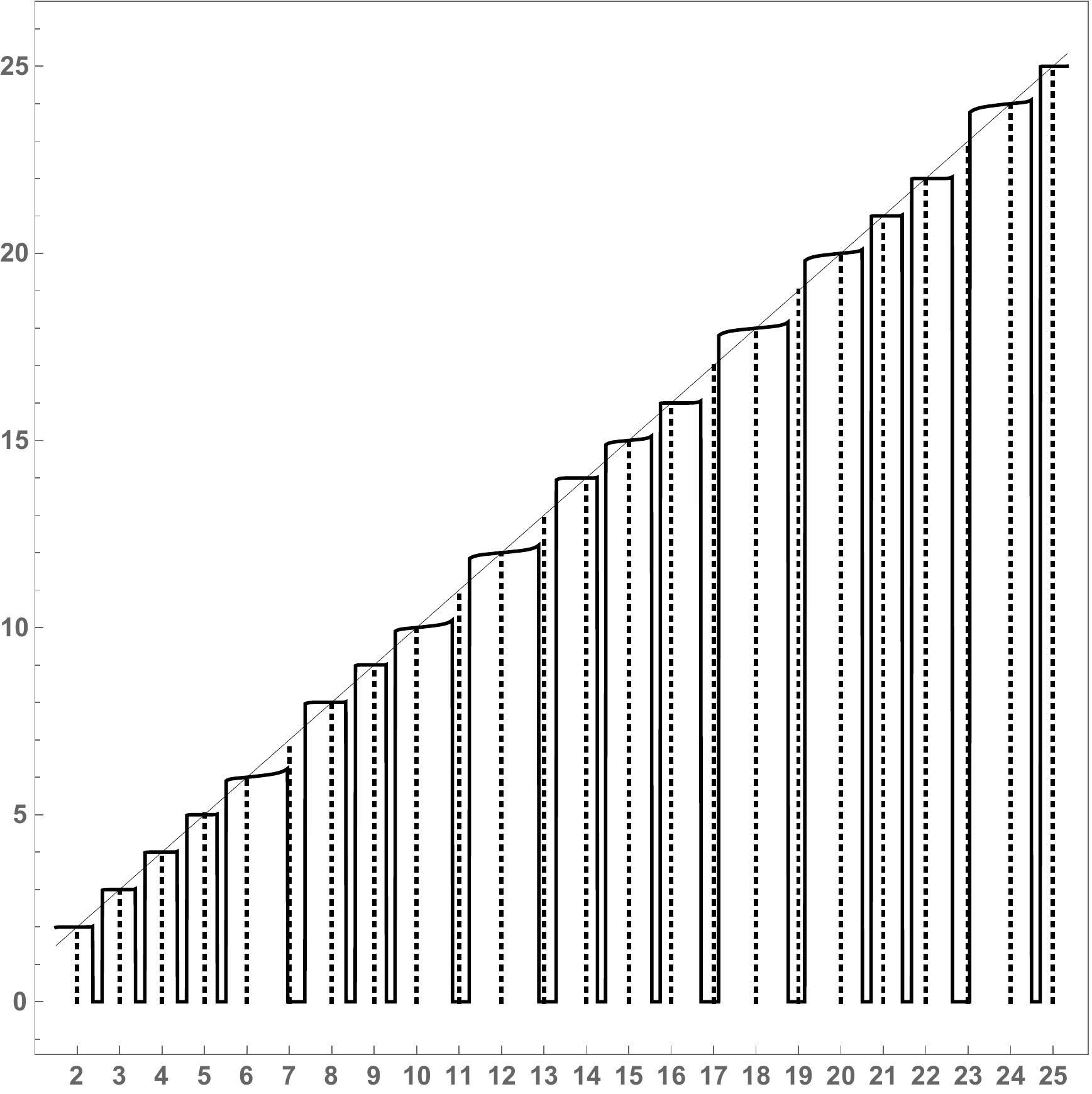}
 \caption{(A) Left the graphic of the Pruitt function $f_3$ in  $D=[1.5,25.5]$;(B) Right  the graphic of $\widetilde{\Psi}_1$ \label{HP1} separating all the zeros of $f_3$.}
\end{center}
\end{figure}

\medskip
\noindent We now consider $f_3$ defined in $D=[1.5, 25.5]$ and the corresponding Halley map ${\cal H}_{f_3}$. As before, the graphic of ${\cal H}_{f_3}$ in  Figure~\ref{HP1}-(A) clearly shows  the nature of the fixed points of this map. Namely, those fixed points of ${\cal H}_{f_3}$ that are roots of $f_3(x)=0$ are either acttracting or supper acttracting and the remaining fixed points are repelling  fixed points. The repelling fixed points  of ${\cal H}_{f_3}$ do not satisfy the predicate $\Pb_1$, since they are in contradiction with Lemma~\ref{propB}.  Therefore, a map $\widetilde{\Psi}_1$,  educated by $\Pb_1$,  separates all the roots of $f_3(x)=0$  (cf. Proposition~\ref{cor2}) as it is  clear from the graphics of $\widetilde{\Psi}_1$ displayed in Figure~\ref{HP1}-(B).

\medskip
\noindent
We remark that for certain classes of functions, such as the Pruitt  family $f_k$, the corresponding Halley map enjoys  the important property of continuity. This property is not verified in the case of the Newton map which will be treated in the next example. Moreover, the  analysis of the graphic of ${\cal H}_{f_3}$ in Figure~\ref{figAB} confirms what is expected concerning  the global convergence of Halley method, as discussed  in \cite{davies}, \cite{hernandez}.

\begin{figure}[hbt] 
\begin{center}
 \includegraphics[scale=0.40]{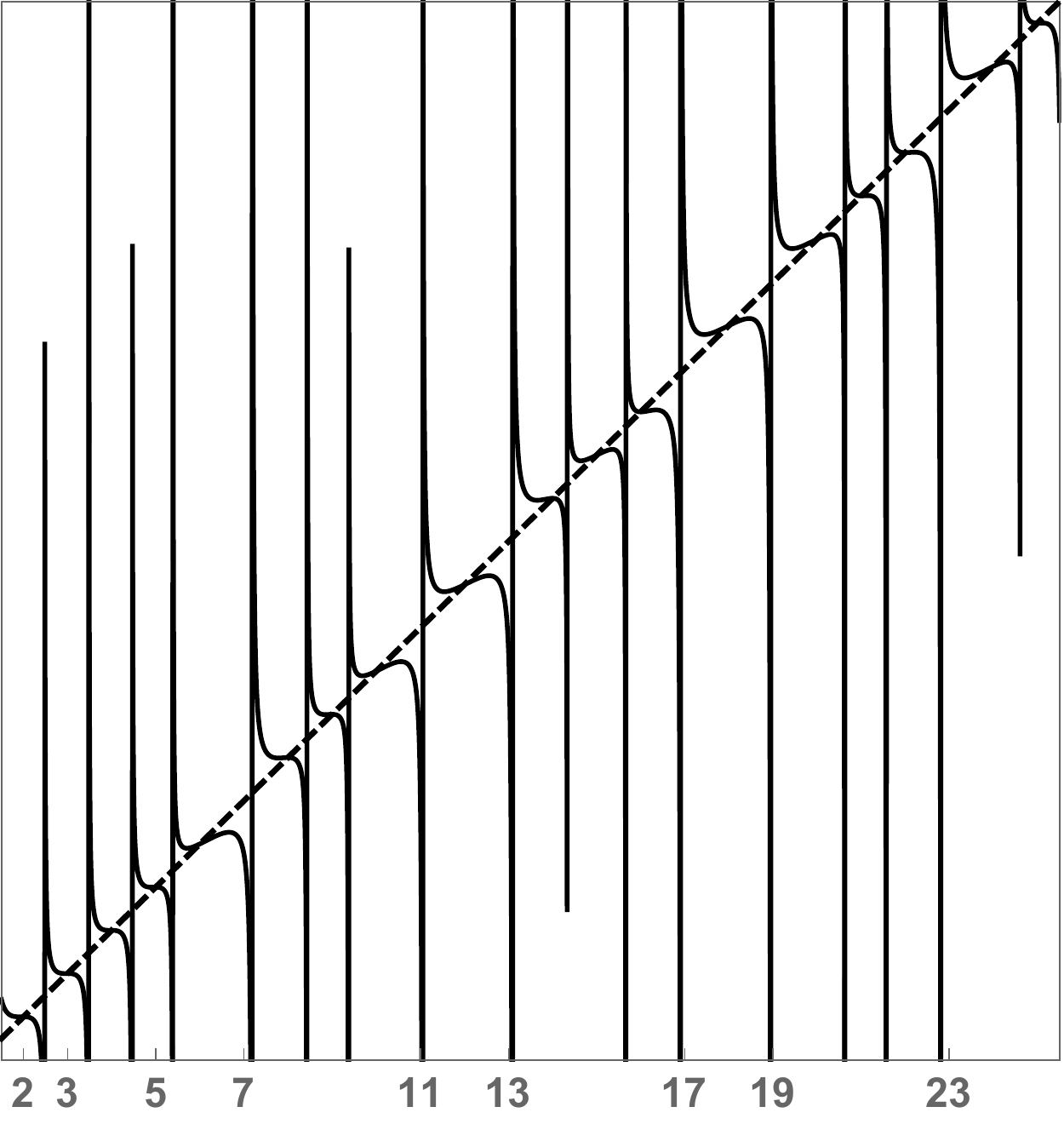}\,\,
  \includegraphics[scale=0.40]{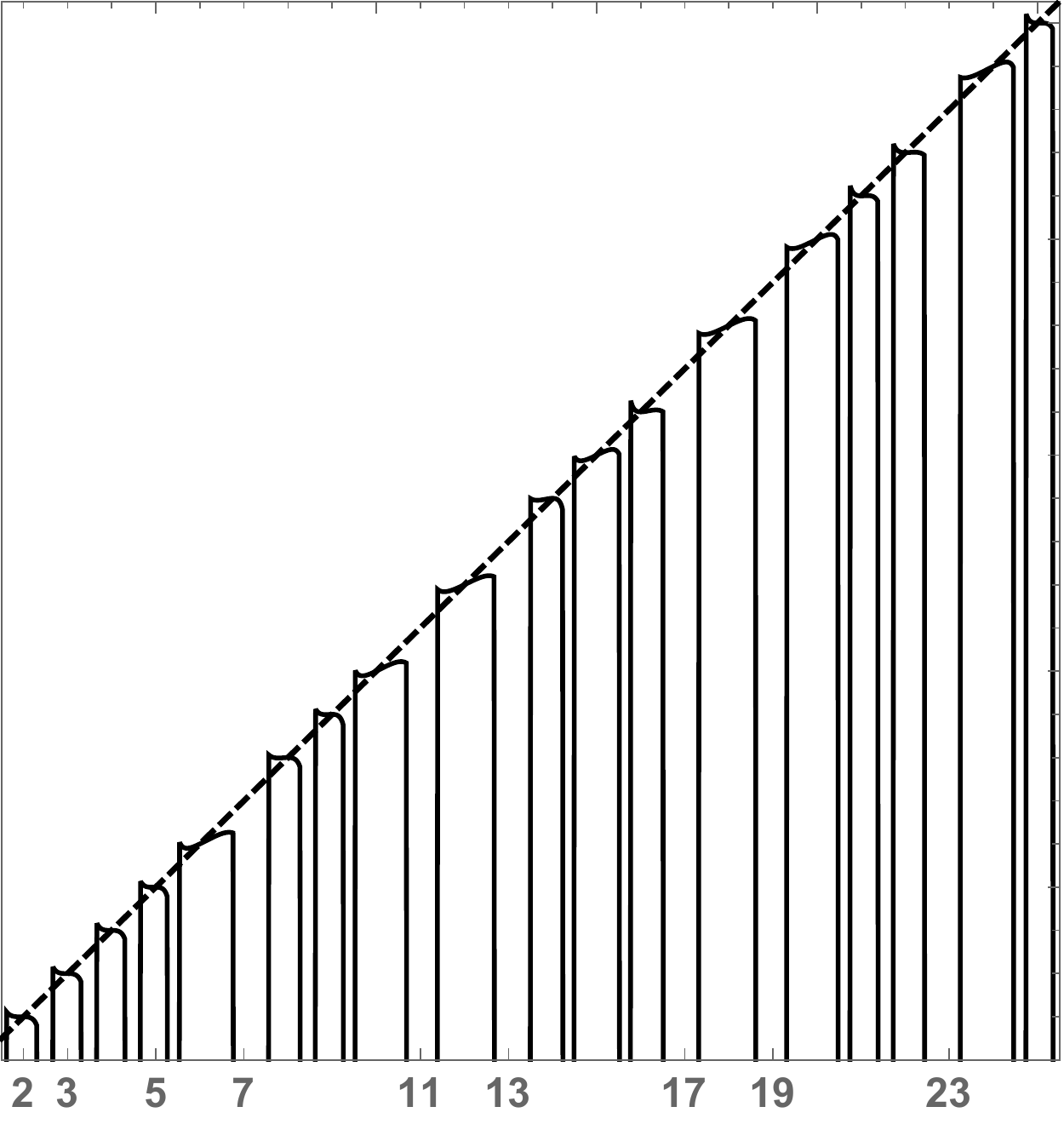}
 \caption{(A) Left, the Newton map ${\cal N}_{f_3}$ in  $D=[1.5,25.5]$;(B) Right, the  Newton educated  map $\widetilde{\Psi}_0$, for $d=0.5$. \label{fig2newt}}
\end{center}
\end{figure}

\begin{example}\label{exemplo3} (Newton quasi\--step maps for $f_3$)
\end{example}

\noindent
Let us consider the Newton map ${\cal N}_{f_3}$ corresponding to the  Pruitt function $f_3$ defined in $D=[1.5,25.5]$.  In contrast with the Halley function of the previous example,   ${\cal N}_{f_3}$ is not continuous in $D$ as it shows the graphic of  ${\cal N}_{f_3}$  in Figure~\ref{fig2newt}-(A). This graphic   presents several vertical asymptotes  which are due to the singularities of the function $f_3$ in the interval. However this time the fixed points of ${\cal N}_{f_3}$  coincide with the roots of $f_3(x)=0$,  that is ${\cal Z}={\cal P}_0\cup {\cal M}$ with ${\cal P}_0$ and ${\cal M}$ given by \eqref{rootsf3}. Obviously,  the resolution of ${\cal Z}$ is $\eta=1$ and all the fixed points of ${\cal N}_{f_3}$ satisfy the attractivity property referred in Proposition~\ref{cor1}. By this proposition,  if one chooses a vertical displacement $d<0.5$ all the roots of $f_3$ will be separated by the quasi-step map $\widetilde {\Psi}_0$ educated by the predicate $\Pb_0$. The  Figure~\ref{fig2newt}-(B) shows that the  ${\cal N}_{f_3}$ educated map $\widetilde{\Psi}_0$, for $d=0.5$, globally separates the zeros of $f_3$.

 \subsection{ Quasi\--step maps for a strongly oscillating function}\label{subexnum}

 \noindent
 The composition of quasi\--step maps can be particularly useful to compute highly accurate   roots of strongly oscillating functions as it is illustrated in the following numerical example.

  \begin{figure}[hbt] 
\begin{center}
  \includegraphics[scale=0.40]{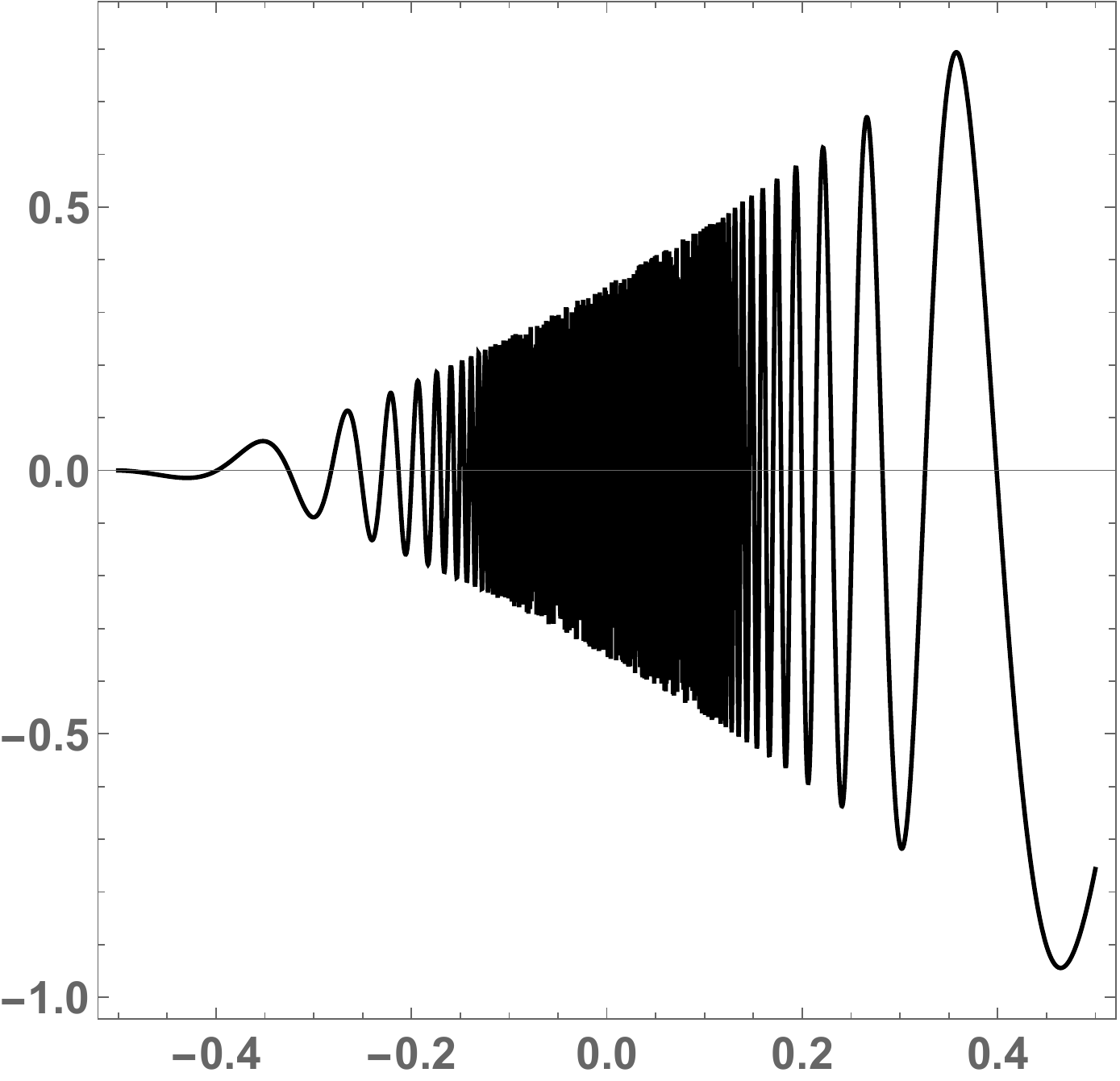}\;   \includegraphics[scale=0.40]{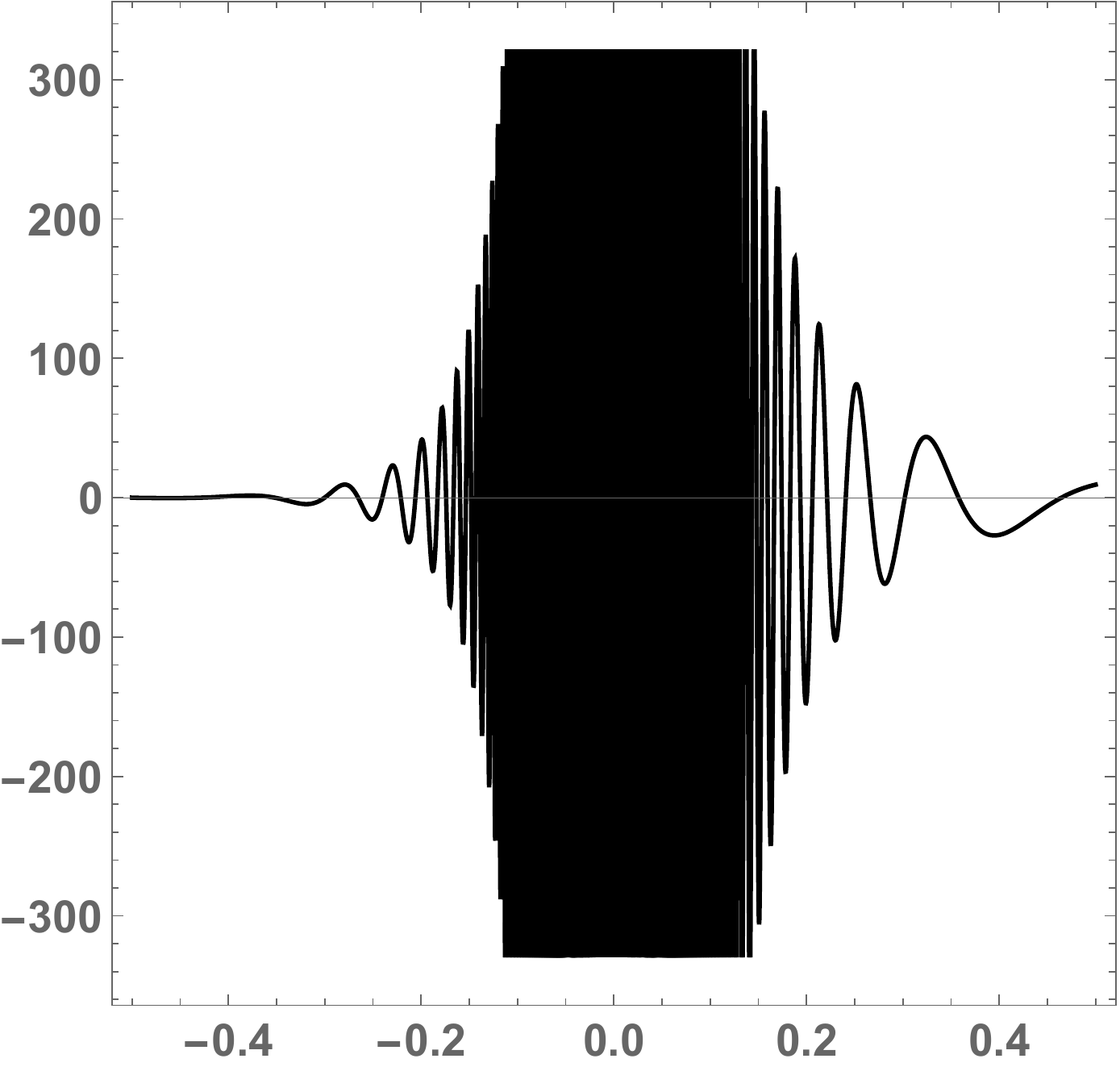}\quad
 \caption{Left, the strongly oscillating function $f$ and right its unbounded derivative. \label{figlips1}}
\end{center}
\end{figure}

\begin{example}\label{exemplo4} (Strongly oscillating function)
\end{example}

\noindent
We consider the function $f(0)=0$, $f(x)=(x+1/2)^{3/2}\sin(1/x)$,  for $x\neq 0$, defined in the interval $D=[-1/2,1/2]$. The function $f$ is differentiable with unbounded derivative and so it is not locally Lipschitzian. Close to the point $x=0$ the function is strongly oscillating  (see Figure \ref{figlips1}).

  \begin{figure}[hbt] 
\begin{center}
  \includegraphics[scale=0.4]{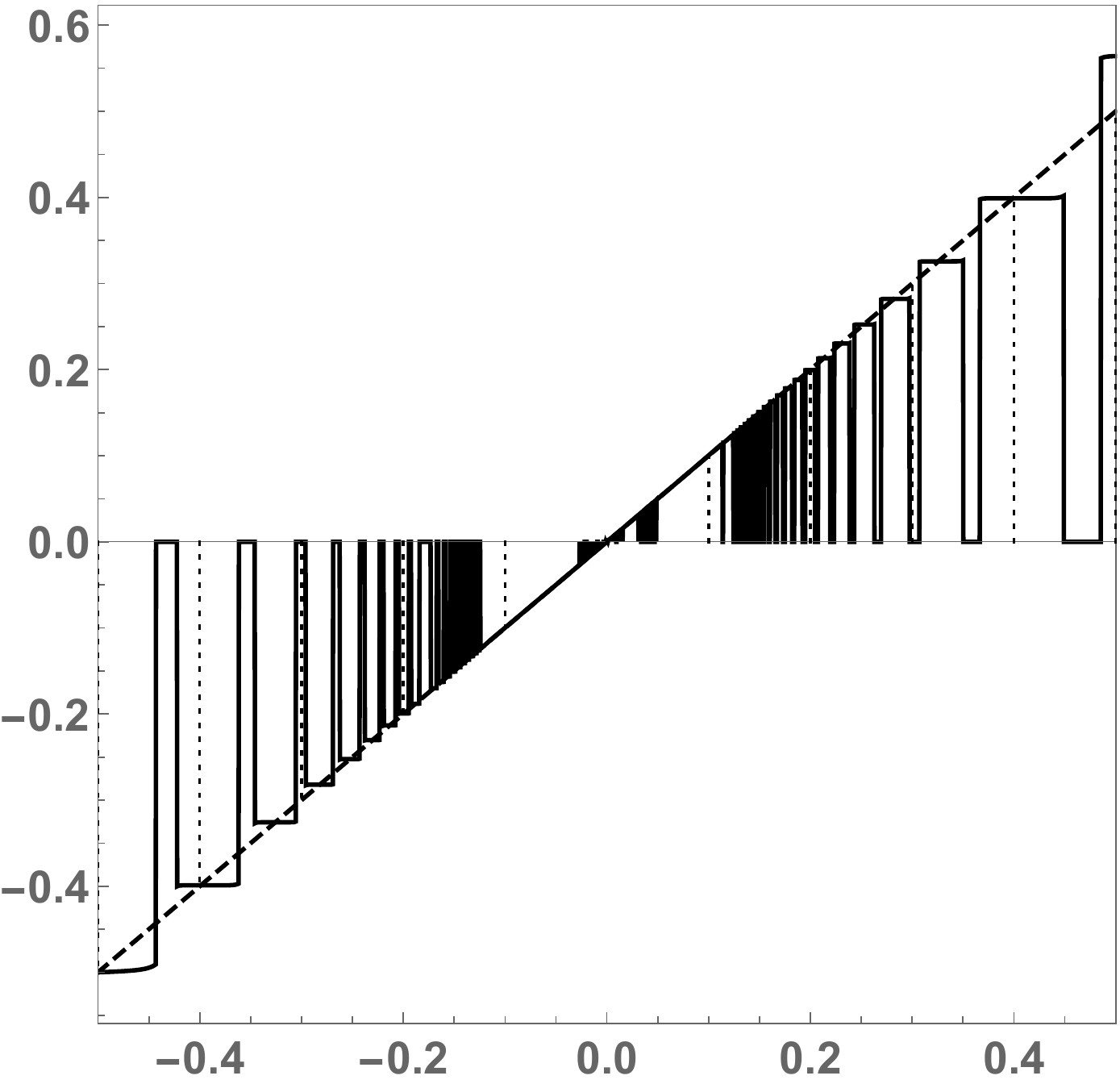}\;   \includegraphics[scale=0.4]{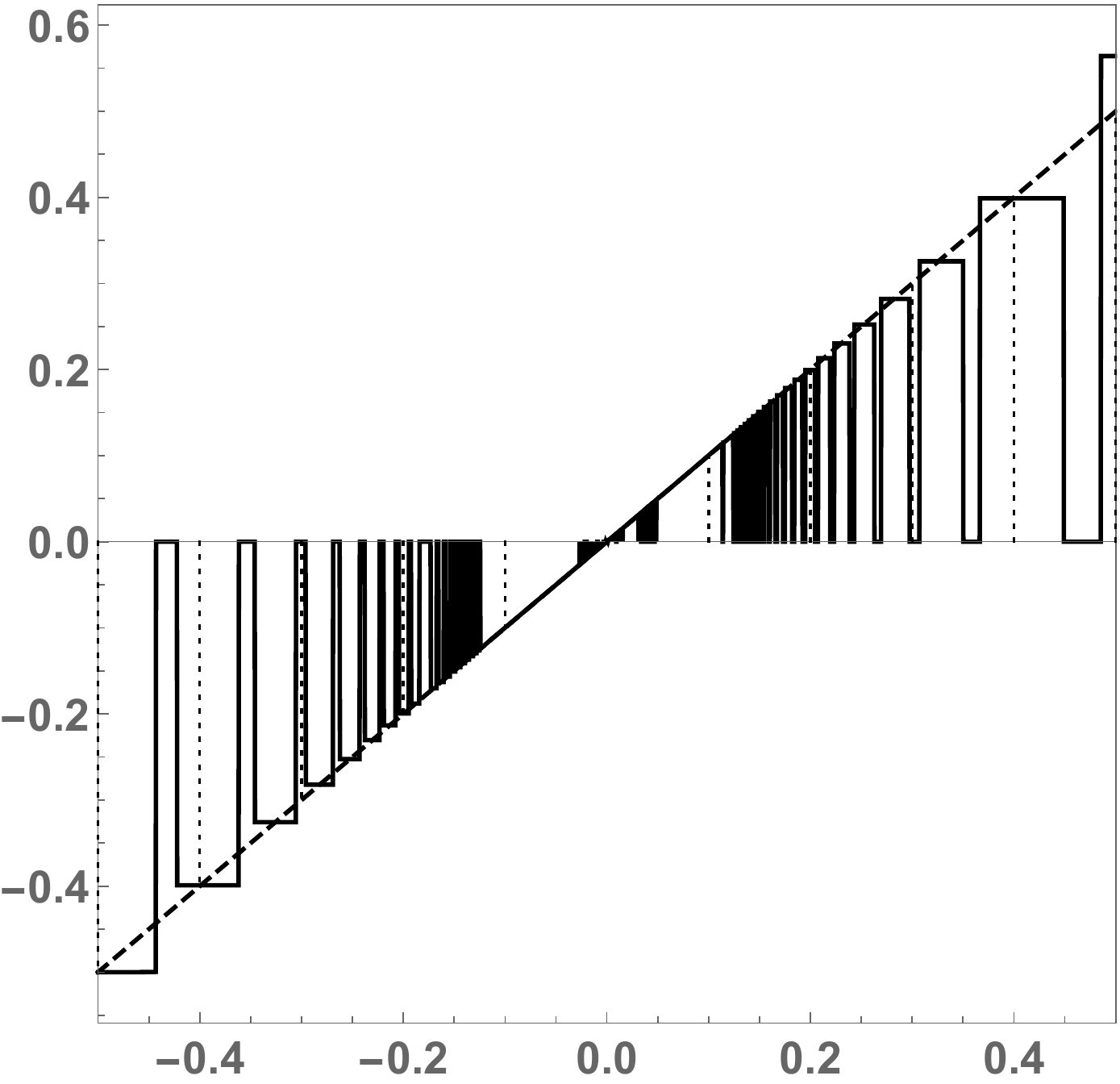}
 \caption{Halley educated maps $\widetilde {\cal H}_{f}$ (left) and $\widetilde {\cal H}^2_{f}$ (right). \label{figlips2}}
\end{center}
\end{figure}

\noindent
As before, let $ {\cal H}_{f}$ the Halley map associated to $f$ as defined in \eqref{eqhalley1}. The map $\widetilde {\cal H}_{f}$ denotes the  Halley map educated by the slope predicate $\Pb_1$ and  $\widetilde {\cal H}^r_{f}$ denotes its $r$-fold composition.
 
 \noindent
In Figure \ref{figlips2} it is displayed the graphics of the maps  $\widetilde {\cal H}_{f}$ and $\widetilde {\cal H}^2_{f}$ which show the quasi-step nature of these maps near the endpoints of the interval.  Moreover, the same  figure shows that in the subinterval say $I=[-0.2,0.2]$ there is  a great number of fixed points clustering around the origin.


\medskip
\noindent
We used the system {\sl Mathematica} to compute approximations of the fixed points of $\widetilde {\cal H}_{f}$  in a certain interval centered at the origin. All the computations  were carried out  on  a $2GHz$ Intel Core $i7$ personal computer, using standard double precision floating point arithmetic.

   \begin{table}[htbp]
$$\begin{array} {|c|c|} \hline
s&\mbox{CPU time} \\ \hline
2&0.38\\  \hline
100&16.3\\  \hline
250&40.5\\  \hline
500&80.5 \\ \hline
\end{array}$$
\caption{CPU time (in seconds) to run $FindInstance$ to simultaneously compute a sample of $s$ roots  in the interval $J=[-0.0097, 0.0097]$. \label{tabela1}}
\end{table}

\noindent
Our  computations  were compared with those obtained by using  some built-in routines of the system {\sl Mathematica}. In particular, the command  $Find\-Instance[expr,vars,\-Reals,s]$, with $vars$ and $expr$ assumed to be real, was used to compute an (unsorted) sample of $s$ real roots of $f(x)=0$. Notably,  in the interval   $J=[x_{min},x_{max}]=[-0.0097, 0.0097]$ the code line $FindInstan\-ce[\{f[u] == 0.,\,  xmin \leq u \leq  xmax\}, u, s]$ was ran for several values of $s$. The respective CPU time (in seconds) is given  in Table~\ref{tabela1}, which shows that for large $s$ the CPU time approximately doubles with $s$. Unless one uses the system option  $WorkingPrecision$, Table~\ref{tabela1} shows that for $s=200$ zeros  of $f$ in the interval $J$, the referred time of $80.5$ seconds is quite unacceptable. In contrast,  using the same default double precision computer arithmetic,  the Halley educated  map $\widetilde {\cal H}_{f}$ can separate, in less than one second,   $s=200$  fixed points in the interval  $J$.

  \noindent
We considered discretized versions of the maps $\widetilde {\cal H}_{f}$, $\widetilde {\cal H}^2_{f}$ and $\widetilde {\cal H}^3_{f}$ obtained by dividing the interval $J=[x_{min}, x_{max}]=[-0.0097, 0.0097]$ into $N=1500$ parts of length $h=(x_{max}-x_{min})/N$. The respective $N+1$ values of the maps at the points $x_i=x_{min}+i\, h$, with $i=0,\ldots,N$ were tabulated. Due to the large number of points considered,  Table~\ref{tabela2} only shows some of these values, namely those which are near the endpoints of $J$.  For convenience, we will call $\cal T$  the  full set of data obtained.

  \begin{table}[htbp]
 \centering
 \small
   \tabcolsep=0.10cm
 \begin{tabular}{|p{2.0cm}| p{2.0cm}|p{2.0cm}|p{2.0cm}|}\hline
 $x_i$& $\widetilde { \vspace{2mm}\cal H}_{f}(x_i)$& $\widetilde {\cal H}^2_{f}(x_i)$&$\widetilde {\cal H}^3_{f}(x_i)$\\ \hline
 -0.0097&-0.0097000521& -0.0097000521& -0.0097000521\\ \hline
-0.0096870667& -0.0096871749& -0.0096871749& -0.0096871749\\  \hline
-0.0096741333& -0.0096743489& -0.0096743489&-0.0096743489\\  \hline
-0.0096612& -0.0096615735&-0.0096615738& -0.0096615738\\  \hline
-0.0096482667& 0& 0&0\\  \hline
-0.0096353333& 0& 0& 0\\  \hline
-0.0096224& -0.0096221501& -0.0096221501& -0.0096221501\\\hline
-0.0096094667& -0.0096095803& -0.0096095803&-0.0096095803\\ \hline
$\qquad \cdots$&$\qquad \cdots$&$\qquad \cdots$&$\qquad \cdots$\\  \hline
$\qquad \cdots$&$ \qquad \cdots$&$ \qquad \cdots$&$ \qquad \cdots$\\ \hline
0.0096094667& 0.0096095803& 0.0096095803& 0.0096095803\\ \hline
0.0096224& 0.0096221501& 0.0096221501& 0.0096221501\\ \hline
0.0096353333& 0& 0& 0\\ \hline
0.0096482667& 0& 0& 0\\ \hline
0.0096612& 0.0096615735& 0.0096615738& 0.0096615738\\ \hline
0.0096741333& 0.0096743489& 0.0096743489& 0.0096743489\\ \hline
0.0096870667& 0.0096871749& 0.0096871749& 0.0096871749\\\hline
0.0097& 0.0097000521& 0.0097000521& 0.0097000521\\ \hline
 \end{tabular}
 \caption{Educated maps $\widetilde {\cal H}^r_{f}(x)$, for $r=1,2, 3$.\label{tabela2} }
 \end{table}
 
\medskip
\noindent
 It is clear from Table~\ref{tabela2} that the points $x_i$ where  the value zero occurs for all the maps $\widetilde {\cal H}_{f}$, $\widetilde {\cal H}^2_{f}$ and $\widetilde {\cal H}^3_{f}$ provide a collection of the subintervals where the slope predicate holds true. In the full set ${\cal T}$ we found  $273$ of such subintervals. In each of these subintervals there exists at least a fixed point of $\widetilde {\cal H}_{f}$. 
 The analysis of ${\cal T}$ also shows that the maps $\widetilde {\cal H}^3_{f}$ and $\widetilde {\cal H}^2_{f}$  are  numerically invariant   (see also  Table~\ref{tabela2}) and so all the computed nonzero values $\widetilde {\cal H}^2_{f} (x_i)$ are approximations of fixed points of the Halley map  ${\cal H}_{f}$, with eight significant decimal digits.
 
 \noindent
 The previous  procedures may be implemented in order to obtain high precision approximations of the fixed points of ${\cal H}_f$. This can be achieved by considering  in the computations not only a convenient machine precision but also an appropriate $r$-fold composition of $\widetilde {\cal H}_{f}$. For instance, taking the same sample of  1501 points $x_i$  in $J$, an extended precision of 1000 decimal digits,  and  computing $\widetilde {\cal H}^r_{f}(x_i)$, for $r=2$ to $r=5$, we obtained a new table  of data in about 10 seconds of CPU time. In this case one can verifies that $\widetilde {\cal H}^4_{f}$ and $\widetilde {\cal H}^5_{f}$ are numerically invariant.
In particular, for the point $x_{N-1}=0.0097-h\simeq 0.0096870667$ we have  $z_{N-1}=\widetilde {\cal H}^5_{f}(x_{N-1})$ with

\begin{equation}\label{valor}
\begin{array}{ll}
z_{N-1}&=0.00968717492126197578990697768061822598\\
& \hspace{4cm}\vdots\\
& 3785734861841858811484051025080640392894,
\end{array}
\end{equation}
where only a certain number of the initial and the final  1000 decimal machine digits are displayed. Computing the residual $f(z_{N-1})$ we obtain 
$$f(z_{N-1}) \simeq 0.*10^{-997}.$$

\noindent
In order to check that $z_{N-1}$ is in fact an accurate root of $f(x)=0$, we use the  {\sl Mathematica} function $FindRoot$ as follows: 
 $$
\begin{array}{l}
x_{N-1} = 0.0096870667;\\
 z = x\, /.\, FindRoot[f[x] == 0, \{x, x_{N-1}\},\, WorkingPrecision \,->\, 1000]; 
 \end{array}
$$
The respective value of $z$ is such that  $z-z_{N-1}\simeq 0.*10^{-1002}$ which shows that all the 1000 decimal digits of $z_{N-1}$, in \eqref{valor}, are correct. 

 \medskip
\noindent
As a final remark let us refer that we have applied  with success several Newton and Halley educated maps to a battery of test functions suggested  in \cite{nerinckx,chun,wu,galantai}. Suitable discretized versions of the respective quasi\--step maps enable the computation of high accurate roots regardless these roots are simple or multiple, and so the approach seems to be particularly useful, in particular for the global separation of zeros of strongly oscillating functions.

\small{

}

\end{document}